\newcommand{\C}{\mathbb{C}}
\newcommand{\R}{\mathbb{R}}
\newcommand{\Q}{\mathbb{Q}}
\newcommand{\N}{\mathbb{N}}
\newcommand{\Z}{\mathbb{Z}}
\newcommand{\A}{\mathbb{A}}
\newcommand{\F}{\mathbb{F}}
\DeclareMathOperator{\Symm}{Symm}
\DeclareMathOperator{\HT}{HT}
\DeclareMathOperator{\WD}{WD}
\DeclareMathOperator{\Res}{Res}
\DeclareMathOperator{\rec}{rec}
\DeclareMathOperator{\diag}{diag}
\DeclareMathOperator{\Ind}{Ind}
\DeclareMathOperator{\Frob}{Frob}
\DeclareMathOperator{\Tr}{Tr}
\DeclareMathOperator{\Gal}{Gal}
\DeclareMathOperator{\GL}{GL}
\DeclareMathOperator{\GSp}{GSp}
\DeclareMathOperator{\PGL}{PGL}
\DeclareMathOperator{\PSL}{PSL}
\DeclareMathOperator{\PGSp}{PGSp}
\DeclareMathOperator{\PSp}{PSp}
\DeclareMathOperator{\Sp}{Sp}
\DeclareMathOperator{\SO}{SO}
\DeclareMathOperator{\Sz}{Sz}
\titleformat{\section}[hang]
{\normalfont\filright\large}{\thesection. }{0pt}
{\upshape\bfseries}
\titleformat{\subsection}[hang]
{\itshape}{\thesubsection \ - }{0pt}
{}
\theoremstyle{plain}
\newtheorem{theo}{Theorem}[section]
\newtheorem{prop}[theo]{Proposition}
\newtheorem{lemm}[theo]{Lemma}
\theoremstyle{remark}
\newtheorem{rema}[theo]{\sc Remark}
\theoremstyle{definition}
\newtheorem{defi}[theo]{Definition}
\title{On the images of the Galois representations attached to generic automorphic representations of $\GSp(4)$}
\author{\small LUIS DIEULEFAIT \footnote{Departament d'Algebra i Geometria, Facultat de Matem$\grave{\mbox{a}}$tiques, Universitat de Barcelona, Gran Via de les Corts Catalanes, 585, 08007 Barcelona, Spain, \texttt{ldieulefait@ub.edu}}, ADRI$\acute{\mbox{A}}$N ZENTENO \footnote{Instituto de Matem\'aticas, (Unidad Cuernavaca) UNAM. Av. Universidad s/n. Col. Lomas de Chamilpa CP 62210, Cuernavaca, Mexico. \texttt{matematicazg@ciencias.unam.mx}}}
\date{\today}
\begin{document}

\maketitle


\begin{abstract}
By making use of Langlands functoriality between $\GSp(4)$ and $\GL(4)$, we show that the images of the Galois representations attached to ``genuine" globally generic automorphic representations of $\GSp(4)$ are  ``large"  for a set of primes of density one. Moreover, by using the notion of $(n,p)$-groups (introduced by Khare, Larsen and Savin) and generic Langlands functoriality from $\SO(5)$ to $\GL(4)$ we construct automorphic representations of $\GSp(4)$ such that the compatible system attached to them has large image for all primes.  

2010 \textit{Mathematics Subject Classification}. 11F80, 11F12, 11F46.
\end{abstract}


\section*{Introduction}

Let $f$ be a newform of level $N$, weight $k>1$, and Nebentypus $\chi$ with $q$-expansion $\sum_{n \geq 1} a_n q^n$ ($q = q(z) = e^{2\pi i z}$); and $E := \Q ( a_n: (n,N)=1 )$ be the coefficient field of $f$ which is a number field. By a construction of Shimura and Deligne \cite{De71}, for each maximal ideal $\lambda$ of $\mathcal{O}_E$ (the ring of integers of $E$), one can attach to $f$ a $2$-dimensional Galois representation
\[
\rho_{\lambda}: G_\Q \rightarrow  \GL(2,\overline{E}_\lambda)
\]  
unramified at all rational primes $p \nmid N \ell$ (where $E_\lambda$ denotes the completion of $E$ at $\lambda$ and $\ell$ denotes the rational prime below $\lambda$) and such that $\Tr(\rho_{\lambda}(\Frob_p)) = a_p$ and $\det (\rho_\lambda (\Frob_p)) = \chi(p) p^{k-1}$ for every rational prime $p \nmid N \ell$. 

Let $\overline{\rho}_{\lambda}$ be the semisimplification of the reduction of $\rho_\lambda$ modulo $\lambda$ and $\overline{\rho}^{proj}_\lambda$ its projectivization. We say that $f$ is \emph{exceptional} at $\lambda$ if the image of $\overline{\rho}_\lambda^{proj}$ is neither $\PSL(2,\F_{\ell^s})$ nor $\PGL(2,\F_{\ell^s})$ for some integer $s>0$. In the 70's and 80's Carayol, Deligne, Langlands, Momose, Ribet, Serre and Swinnerton-Dyer proved that, if $f$ does not have complex multiplication then $f$ is exceptional at most at finitely many $\lambda$ (see the introduction of \cite{Rib85}).

In this paper, we prove a weak version of the analogous result for automorphic representations of $\GSp(4,\A_\Q)$. More precisely, let $\pi =\pi_\infty \otimes \pi_f$ be a unitary cuspidal irreducible automorphic representation of $\GSp(4,\A_\Q)$ of cohomological weight $(m_1,m_2)$, $m_1 \geq m_2 \geq 0$, for which $\pi_\infty$ belongs to the discrete series.  Thanks to the work of Laumon, Taylor and Weissauer \cite{we05} we can attach to $\pi$ a number field $E$, and for all maximal ideal $\lambda$ of $\mathcal{O}_E$, a 4-dimensional Galois representation  
\[
\rho_{\lambda}: G_\Q \rightarrow  \GL(4, \overline{E}_\lambda)
\]
which is unramified outside $S \cup \{ \ell \}$, where $S$ is the set of places where $\pi$ is not spherical. Then, by using the generic Langlands transfer from $\GSp(4)$ to $\GL(4)$ and some recent results about residual irreducibility of compatible systems, we prove that if $\pi$ is genuine and globally generic then $\pi$ is ``exceptional" at most at a set of primes of density zero. 

Alternatively, a way to build representations with large image is by using the notion of $(n,p)$-groups introduced in \cite{KLS}. With this tool we prove that a symplectic compatible system of $4$-dimensional semisimple Galois representations attached to a RAESDC automorphic representation of $\GL(4)$ has only finitely many exceptional primes if it is maximally induced in an appropriate prime (see Section \ref{se3}).

In the same direction, stronger results have been proved in \cite{DW} and \cite{DZ} for compatible systems attached to classical and Hilbert modular forms. More precisely, in loc. cit. there have been constructed (Hilbert) modular forms such that the compatible systems attached to them, do not have exceptional primes.
At the end of this paper, by making use of Langlands functoriality (from $\SO(5)$ to $\GL(4)$ and from $\GL(4)$ to $\GSp(4)$), we generalize this construction to automorphic representations of $\GSp(4,\A_\Q)$.


\subsection*{Notation}
We shall use the following notations. Let $K$ be a number field or a $\ell$-adic field. We denote by
$\mathcal{O}_K$ its ring of integers. For a maximal ideal $\mathfrak{q}$ of $\mathcal{O}_K$, we let $D_\mathfrak{q}$ and $I_\mathfrak{q}$ be the corresponding decomposition and inertia group at $\mathfrak{q}$, respectively. We denote by $G_K$ the absolute Galois group of $K$. In particular if $K$ is a number field by a prime of $K$ we mean a nonzero prime ideal of $\mathcal{O}_K$. 
In this paper, the symplectic similitude group $\GSp(4)$ is defined with respect to the following skew-symmetric matrix:
\[
\left( \begin{array}{cc}
0 & J \\
-J & 0 \end{array} \right),
\mbox{ where }
J = \left( \begin{array}{cc}
0 & 1 \\
1 & 0 \end{array} \right).
\]
Finally, $\WD(\rho)^{F-ss}$ will denote the Frobenius semisimplification of the Weil-Deligne representation attached to a representation $\rho$ of $G_{\Q_\ell}$, and $\rec$ is the notation for the Local Langlands Correspondence, which attaches to an irreducible admissible representation of $\GL(4,\Q_\ell)$ (resp. $\GSp(4,\Q_\ell)$) a Weil-Deligne representation of the Weil group $W_{\Q_\ell}$ as in \cite{HT01} (resp. \cite{GT11}). 


\subsection*{Acknowledgments}
This paper is part of the second author's PhD thesis. Part of this work has been written during a stay at  Institut Galilee of the University of Paris 13 and a stay, of the second author, at the Mathematical Institute of the University of Barcelona. The authors would like to thank these institutions for their
support and the optimal working conditions. The second author wants to thank Jacques Tilouine for many stimulating conversations, and Sara Arias de Reyna and Michael Larsen for a useful correspondence. 
Finally, the authors want to give special thanks to the anonymous referee, whose comments and suggestions have greatly improved the presentation and readability of this paper.
The research of L.D. was supported by an ICREA Academia research prize. The research of A.Z. was supported by LAISLA and by the CONACYT grant no. 432521/286915.


\section{RAESDC automorphic representations of $\GL(4)$} 
In this section we review some facts about RAESDC automorphic representations and the Galois representations associated to them. Our main references are \cite[Section 2]{CT}, \cite{CH13} and \cite{Clo}.
Let $\A$ be the ring of rational adeles. By a RAESDC (regular algebraic, essentially self-dual, cuspidal) automorphic representation of $\GL(4, \A)$ we mean a pair $(\Pi, \mu)$ consisting of a cuspidal automorphic representation $\Pi = \Pi_\infty \otimes \Pi_f$ of $\GL(4, \A)$ and an algebraic character $\mu: \A^\times / \Q^\times \rightarrow \C^\times$ such that:
\begin{enumerate}
\item (essentially self-dual) $\Pi \cong \Pi ^{\vee} \otimes \mu$.
\item (regular algebraic) $\rec_\R (\Pi_\infty \otimes \vert \; \vert ^{-\frac{3}{2}})$ restricted to $\C^\times$ is a direct sum of pairwise distinct algebraic characters. Here $\rec_\R$ denotes the Local Langlands correspondence for $\GL(4,\R)$  with the Langlands'normalization.
\end{enumerate}

Recall that, by the Lemma of Purity \cite[Lemma 4.9]{Clo}, there exist an integer $w \in \Z$ such that  $\rec_\R (\Pi_\infty)$ restricted to $\C^\times$ is of the form
\[
z \mapsto \diag \left( z^{\alpha_1} \overline{z}^{w - \alpha_1}, \ldots,  z^{\alpha_4} \overline{z}^{w-\alpha_4} \right),
\]
where $ \alpha_{i} \in 3/2 + \Z$ and $\alpha_1 > \cdots > \alpha_4$. The tuple $\alpha = (\alpha_1, \cdots, \alpha_4)$ are called the \emph{infinite type} of $\Pi$. We remark that $w=0$ if and only if $\Pi$ is unitary. Finally we define the \emph{weight} of $\Pi$ as the tuple $a = (a_1, \cdots, a_4)$ where $a_i$ is defined by the formula $a_i = i - \alpha_{5-i} - 5/2$, so $a_1 \geq \cdots \geq a_4$.  

Let $n \in \N$. A \emph{compatible system} $\rho = (\rho_{\lambda})_{\lambda}$ of $n$-dimensional Galois representations of $G_\Q$ consist of the following data:
\begin{enumerate}
\item A number field $E$.
\item A finite set $S$ of primes of $\Q$.
\item For each prime $p \notin S$, a monic polynomial $P_p(X) \in \mathcal{O}_E[X]$.
\item For each prime $\lambda$ of $E$ (together with fixed embeddings $E \hookrightarrow E_\lambda \hookrightarrow \overline{E}_\lambda$) a continuous Galois representation
\[
\rho_\lambda : G_\Q \rightarrow \GL(n,\overline{E}_\lambda)
\]
such that $\rho_\lambda$ is unramified outside $S \cup \{ \ell \}$ (where $\ell$ is the residual characteristic of $\lambda$) and such that for all $p \notin S \cup \{ \ell \}$ the characteristic polynomial of $\rho_\lambda(\Frob_p)$ is equal to $P_p(X)$.
\end{enumerate} 

\begin{theo}\label{raesdc}
Let $(\Pi, \mu)$ be a RAESDC automorphic representation of $\GL(4, \A)$. Then, there exist a number field $E$, a finite set $S$ of primes of $\Q$, and compatible systems of semisimple Galois representations
\[
\rho_{\Pi, \lambda} : G_{\Q} \rightarrow \GL(4, \overline{E}_\lambda) \; \quad \mbox{ and } \; \quad
\rho_{\mu, \lambda} : G_\Q \rightarrow \overline{E}^{\times}_\lambda
\]
where $\lambda$ ranges over all primes of $E$ (together with fixed embeddings $E \hookrightarrow E_{\lambda} \hookrightarrow \overline{E}_{\lambda}$) such that the following properties are satisfied.
\begin{enumerate}
\item $\rho_{\Pi,\lambda} \cong \rho_{\Pi,\lambda} \otimes \chi^{1-n}_\ell\rho_{\Pi,\lambda}(\mu)$, where $\chi_\ell$ denotes the $\ell$-adic cyclotomic character.  
\item The representations $\rho_{\Pi,\lambda}$ and $\rho_{\mu,\lambda}$ are unramified outside $S \cup \{ \ell \}$.
\item The representations $\rho_{\Pi,\lambda} \vert _{G_{\Q_\ell}}$ and $\rho_{\mu,\lambda} \vert _{G_{\Q_\ell}}$ are de Rham, and if $\ell \notin S$, they are crystalline.
\item $\rho_{\Pi,\lambda}$ is regular, and the set of Hodge-Tate weights $\HT(\rho_{\Pi,\lambda})$ is equal to:
\[
\{ a_4, a_3+1, a_2+2, a_1+3 \}.
\]
\item Fix any isomorphism $ \iota : \overline{E}_{\lambda} \simeq \C$ compatible with the inclusion $E \subset \C$. Then
\[
\iota \WD(\rho_{\Pi,\lambda} \vert_{\Q_p})^{F-ss} \cong \rec (\Pi_p \otimes \vert \det \vert _p ^{-3/2}).
\]
\end{enumerate}
\end{theo}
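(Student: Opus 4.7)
The theorem is a compilation of several deep results on the attachment of $\ell$-adic Galois representations to regular algebraic essentially self-dual cuspidal automorphic representations of $\GL(n)$. The plan is to invoke these results rather than redo the construction from scratch. The strategy is to base change $\Pi$ to a suitable imaginary quadratic CM extension $F/\Q$ so that $\Pi_F$ becomes regular algebraic conjugate self-dual cuspidal (RACSDC). Then $\Pi_F$ falls in the class for which $\ell$-adic Galois representations of $G_F$ have been constructed in the cohomology of a Shimura variety attached to a unitary similitude group, via the combined work of Clozel, Kottwitz, Harris--Taylor, Morel, Shin and Chenevier--Harris.

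One then descends from $G_F$ to $G_\Q$ using the essential self-duality of $\Pi$; this descent is carried out, for general $n$, in the work of Barnet--Lamb, Gee, Geraghty and Taylor, where an essentially identical statement is proved. This furnishes the compatible system $\rho_{\Pi,\lambda}$. The second system $\rho_{\mu,\lambda}$ is the compatible system of $\ell$-adic characters associated to $\mu$ by global class field theory (via the reciprocity map normalized so that uniformizers correspond to geometric Frobenii). Property (i) is then the translation into the Galois world of the identity $\Pi \cong \Pi^\vee \otimes (\mu\circ\det)$: it is verified on Frobenius traces at all unramified primes using property (v) at those primes, and extended to $G_\Q$ by Chebotarev density together with the semisimplicity of $\rho_{\Pi,\lambda}$.

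Properties (ii), (iii) and (iv) follow from the construction in a now-standard way. Unramification outside $S\cup\{\ell\}$ reflects that $\Pi$ is spherical outside $S$ and that the Shimura variety has good reduction away from $S$. The de Rham (resp.\ crystalline, at good primes) property at $\ell$ is a consequence of the $p$-adic comparison theorems of Tsuji and Faltings applied to the $\ell$-adic \'etale cohomology of the Shimura variety. The Hodge--Tate weights are read off the archimedean $L$-parameter $\rec_\R(\Pi_\infty)$ through the dictionary $a_{i}=i-\lambda_{5-i}-5/2$ recalled above the statement, which yields precisely the multiset $\{a_1+3,a_2+2,a_3+1,a_4\}$; regularity is equivalent to the sharpness of $a_1\geq\cdots\geq a_4$.

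The most delicate point, and the one I expect to be the main obstacle, is property (v): local-global compatibility at all finite places. At primes $p\notin S\cup\{\ell\}$ it is the defining property of the compatible system. At ramified primes $p\neq\ell$, compatibility up to Frobenius semi-simplification is the theorem of Caraiani, completing earlier results of Taylor--Yoshida and Shin. The case $p=\ell$, in which the monodromy operator produced by $p$-adic Hodge theory must be matched with the one predicted by the local Langlands correspondence, is handled by Barnet--Lamb, Gee, Geraghty and Taylor. Once these two local-global compatibility theorems are imported, all five assertions of the theorem are in hand.
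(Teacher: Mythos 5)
Your proposal is correct and follows essentially the same path as the paper: solvable base change to CM fields, the RACSDC construction of Chenevier--Harris, and local-global compatibility via Caraiani, then descent to $G_\Q$. The only cosmetic difference is in attribution — the paper cites the patching lemma of Sorensen for the descent step and Caraiani's second paper for $p=\ell$ compatibility, whereas you cite BLGGT for both, and your phrase ``a suitable imaginary quadratic CM extension'' glosses over the fact that one must vary the CM field and patch, but since BLGGT carry this out in full, the argument you invoke is complete.
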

\begin{proof}
This theorem follows from the analogous result for RACSDC (regular algebraic, conjugate self-dual, cuspidal) automorphic representations over CM fields, by using the solvable base change theorems of Arthur-Clozel \cite{AC89} and the patching lemma of \cite{Sor}.
The proof of the existence of the representations $\rho_{\Pi, \lambda}$, in the RACSDC case, can be found in \cite{CH13} and the strong form of local-global compatibility is proved in \cite{Car12a} and \cite{Ca12}. 
\end{proof}


\section{Cuspidal automorphic representations of $\GSp(4)$} \label{lasec}

Let $\pi = \pi_\infty \otimes \pi_f$ be a globally generic, irreducible, cuspidal, automorphic representation of $\GSp(4,\A)$ with cohomological weight $(m_1,m_2)$, $m_1 \geq m_2 \geq 0 $, and central character $\omega_{\pi}$, such that $\pi_\infty$ belongs to the discrete series. Let $w=m_1 +m_2$ and $\pi^\circ := \pi \otimes \vert c \vert^{w/2}$, where $c$ denotes the similitude character of $\GSp(4)$. From now on, we will assume that $\pi^\circ$ is unitary and that $\pi$ is neither CAP nor endoscopic. 

\begin{rema}
When $\pi$ is CAP or endoscopic, it is well known that the Galois representations associated to $\pi$ are reducible, so they cannot have large image. See Section 3.2 of \cite{mok} and the Introduction of \cite{we05} for more details. 
\end{rema}

With $\pi$ satisfying the above hypotheses, we can lift $\pi$ to a cuspidal automorphic representation $\Pi$ of $\GL(4,\A)$  with central character $\omega_\Pi = \omega_\pi^2$ and such that its archimedean $L$-parameter $\phi$ has the following restriction to $\C^*$:
\[
z \mapsto \vert z \vert ^{-w} \cdot \diag \left( (z/\overline{z})^{\frac{v_1+v_2}{2}} , (z/\overline{z})^{\frac{v_1-v_2}{2}} , (z/\overline{z})^{- \frac{v_1-v_2}{2}} , (z/\overline{z})^{- \frac{v_1+v_2}{2}} \right),
\]
where $v_1 = m_1 +2$ and $v_2 = m_2 + 1$ give the Harish-Chandra parameter of $\pi_\infty$ (see Section 2 of \cite{Sor} or Section 3.1 of \cite{mok}). Such lifting satisfies the following properties:
\begin{enumerate}
\item $\Pi \simeq \Pi^\vee \otimes \omega_\pi$,
\item $L^S(s,\Pi, \wedge ^2 \otimes \omega_\pi^{-1})$ has a pole at $s=1$, and
\item it is a strong lift, that is, $\rec(\pi_v) = \rec(\Pi_v)$ for each place $v$. 
\end{enumerate} 

It has been known for some time that we can obtain a weak lift using theta series. This was first announced by Jacquet, Piatetski-Shapiro and Shalika, but to the best of our knowledge they never wrote up a proof.  
However, there is an alternative proof due to Asgari and Shaidi \cite{AS08} relying on the converse theorem. The strong lift and the characterization of its image is due to Gan and Takeda (see Section 12 of \cite{GT11}).

We say that a compatible system $\rho =(\rho_\lambda)_\lambda$ of $4$-dimensional Galois representations of $G_\Q$ is \emph{symplectic} if for all $\lambda$ the representation $\rho_\lambda$ is of the form $G_\Q \rightarrow \GSp(4, \overline{E}_\lambda)$.

\begin{theo}\label{21}
Suppose that $\pi$ is a cuspidal automorphic representation of $\GSp(4, \A)$ satisfying the hypotheses in the beginning of this section. Let $S$ denote the set of places where $\pi$ is not spherical. 
Then, there exist a number field $E$, and a symplectic compatible system of semisimple Galois representations
\[
\rho_{\pi, \lambda} : G_{\Q} \rightarrow \GSp(4, \overline{E}_\lambda)
\]
where $\lambda$ ranges over the finite places of $E$ (together with fixed embeddings $E \hookrightarrow E_{\lambda} \hookrightarrow \overline{E}_{\lambda}$) such that the following properties are satisfied.
\begin{enumerate}
\item The representation $\rho_{\pi,\lambda}$ is unramified outside $S \cup \{ \ell \}$.
\item The representations $\rho_{\pi,\lambda} \vert _{G_{\Q_\ell}}$ are de Rham, and if $\ell \notin S$, they are crystalline.
\item The set of Hodge-Tate weights $\HT(\rho_{\pi,\lambda})$ is equal to:
\[
\{0, m_2+1, m_1+2, m_1+m_2+3 \}.
\]
\item Fix any isomorphism $ \iota : \overline{E}_{\lambda} \simeq \C$ compatible with the inclusion $E \subset \C$. Then
\[
\iota \WD(\rho_{\pi,\lambda} \vert_{\Q_p})^{F-ss} \cong \rec (\pi_p \otimes \vert c \vert _p ^{-3/2}).
\]
\end{enumerate}
\end{theo}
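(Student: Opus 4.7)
My plan is to deduce Theorem \ref{21} from Theorem \ref{raesdc} by passing through the strong Langlands lift $\Pi$ to $\GL(4,\A)$ that was recalled just before the statement. First I will verify that, after twisting $\Pi$ by an appropriate power of $|\det|$ to absorb the $|z|^{-w}$ factor in $\rec_\R(\Pi_\infty)$, the pair $(\Pi, \omega_\pi)$ is a RAESDC automorphic representation in the sense of Section 1: essential self-duality is property (1) of the lift; cuspidality follows from the assumption that $\pi$ is neither CAP nor endoscopic, together with Gan--Takeda's characterization of the image of the transfer; and regular algebraicity can be read off from the explicit form of $\rec_\R(\Pi_\infty)|_{\C^\times}$ combined with the regularity of the Harish-Chandra parameter $(v_1, v_2)$.

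With this in place, I apply Theorem \ref{raesdc} to produce a compatible system $\rho_{\Pi, \lambda}: G_\Q \to \GL(4, \overline{E}_\lambda)$ and define $\rho_{\pi,\lambda}$ to be $\rho_{\Pi,\lambda}$ after undoing the initial $|\det|$-twist by the corresponding twist of the Galois representation by a power of the cyclotomic character. Properties (1) and (2) of Theorem \ref{21} are then immediate consequences of the corresponding items of Theorem \ref{raesdc}. For property (3) I will extract the four $\lambda_i \in 3/2 + \Z$ from the explicit archimedean parameter of $\Pi$, compute $a_i = i - \lambda_{5-i} - 5/2$, and check directly that $\{a_1+3,\ a_2+2,\ a_3+1,\ a_4\}$, shifted back by the cyclotomic twist, becomes $\{0,\ m_2+1,\ m_1+2,\ m_1+m_2+3\}$. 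Property (4) will follow from combining Theorem \ref{raesdc}(5) with the strong-lift identity $\rec(\Pi_p) = \rec(\pi_p)$ at every place, since this identity immediately transports local-global compatibility from $\Pi$ to $\pi$.

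The principal difficulty is the last task: upgrading the $\GL(4)$-valued $\rho_{\pi,\lambda}$ to a $\GSp(4)$-valued compatible system, which is not visible from Theorem \ref{raesdc} alone. The input on the automorphic side is property (2) of the lift, namely that the exterior square $L$-function $L^S(s, \Pi, \wedge^2 \otimes \omega_\pi^{-1})$ has a pole at $s=1$, which places $\Pi$ on the symplectic rather than orthogonal branch of the self-dual dichotomy for $\GL(4)$. To transfer this dichotomy to the Galois side I plan to invoke results of Bella\"iche--Chenevier (or equivalent results of Calegari--Gee) on self-dual Galois representations attached to essentially self-dual cuspidal representations of $\GL(2n)$: the pole of the exterior square $L$-function forces the pairing on $\rho_{\Pi,\lambda}$ inherited from the essential self-duality to be symplectic rather than orthogonal, so $\rho_{\pi,\lambda}$ in fact factors through $\GSp(4, \overline{E}_\lambda)$, with similitude character obtained from $\omega_\pi$ and the cyclotomic shift. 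This sign-of-the-pairing step is where the symplectic hypothesis genuinely enters, and is where most of the real work lies.
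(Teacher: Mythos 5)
Your overall scaffolding matches the paper: lift $\pi$ to a $\GL(4)$-representation via Gan--Takeda, check the RAESDC conditions (cuspidality from the non-CAP/non-endoscopic hypothesis, regular algebraicity from the explicit Harish--Chandra parameter, essential self-duality from $\Pi \cong \Pi^\vee \otimes \omega_\pi$), and define $\rho_{\pi,\lambda}$ to be $\rho_{\Pi,\lambda}$, with (1), (2), (4) of Theorem \ref{21} read off from Theorem \ref{raesdc} and the strong-lift identity $\rec(\Pi_p)=\rec(\pi_p)$, and (3) from the explicit archimedean parameter. That part is essentially the paper's argument.

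Where you genuinely diverge is the symplectic upgrade, and it is the crux of the theorem. The paper does not go through the sign-of-the-pairing machinery at all. Instead it invokes the multiplicity-one theorem of Jiang--Soudry for globally generic cuspidal representations of $\GSp(4,\A)$, which lets one match $\rho_{\Pi,\lambda}$ with the Galois representation Weissauer constructs from the middle \'etale cohomology of a Siegel threefold; Weissauer's Theorem IV then gives the $\GSp(4)$-valued target directly, the alternating pairing being supplied geometrically by Poincar\'e duality. Your route, via the pole of $L^S(s,\Pi,\wedge^2 \otimes \omega_\pi^{-1})$ at $s=1$ and the Bella\"iche--Chenevier/Calegari--Gee/BLGGT sign results, is a legitimate automorphic alternative, but be careful: those sign theorems are usually proved under an irreducibility hypothesis on $\rho_{\Pi,\lambda}$, and you need the conclusion for \emph{all} $\lambda$, including those (a priori a density-zero set) where the residual or even the $\lambda$-adic representation may be reducible. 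You would either have to check that the version you cite covers the reducible case, or supplement the argument there. The geometric route in the paper sidesteps this entirely, which is precisely what it buys; your route has the advantage of not invoking Weissauer's cohomological construction and staying purely on the automorphic/$\GL(4)$ side. You are also more careful than the paper about the $|\det|$-twist needed to land in the RAESDC normalization of Section 1; the paper glosses over this, so that extra bookkeeping is not wrong, just more explicit.
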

\begin{proof}
First, from the previous discussion we can lift $\pi$ to a RAESDC automorphic representations $(\Pi,\mu)$ of $\GL(4,\A)$. Then we define the compatible system of Galois representations $\rho_{\pi, \lambda}$ associated to $\pi$ as $\rho_{\Pi, \lambda}$ (the compatible system of Galois representations associated to $(\Pi,\mu)$ in Theorem \ref{raesdc}). In fact, $\rho_{\pi, \lambda}$ can be also constructed directly from the cohomology of a suitable Siegel threefold (see Theorem I of \cite{we05}). 
As the representations of both constructions have the same Frobenius traces, we can conclude, by the Brauer-Nesbitt theorem, that the representations obtained in both constructions are isomorphic. 

On the other hand, we know that all globally generic cuspidal automorphic representation of $\GSp(4,\A)$ have multiplicity one (see \cite{JS07}).  Then, from Theorem IV of \cite{we05}, $\rho_{\pi,\lambda}$ takes values in $\GSp(4,\overline{E}_{\lambda})$. The Hodge-Tate weights, item $iii)$, can be calculated as at the end of Section 4.4 of \cite{Sor}, and the local-global compatibility, item $iv)$, is as in Theorem 3.1 of \cite{mok}. 
\end{proof}

\begin{rema} Note that from the property $iv)$ of the previous theorem follows that the conductor of $\rho_{\pi,\lambda}$ is independent of $\lambda$. Then, this can be called the \emph{conductor} of the compatible system.
\end{rema}


\section{Galois Representations with large images}\label{se4}

Let $\rho_{\pi,\lambda}: G_{\Q} \rightarrow \GSp(4,\overline{E}_\lambda)$ be a $4$-dimensional symplectic Galois representations as in Theorem \ref{21}.   
In this case, we can take as $E$, the number field generated over $\Q$ by the coefficients of the characteristic polynomials of all $\rho_{\pi,\lambda}(\Frob_{p})$, $p \notin S$. By using Lemma 3 of \cite{DKR}, we can define the residual mod $\lambda$ Galois representation $\overline{\rho}_{\pi,\lambda}:G_\Q \rightarrow \GSp(4, \F_\lambda)$, where $\F_\lambda = \mathcal{O}_E / \lambda$. 
We denote by $\overline{\rho}_{\pi,\lambda}^{proj}$ the composition of $\overline{\rho}_{\pi,\lambda}$ with the natural projection $\GSp(4,\F_\lambda) \rightarrow \PGSp(4,\F_\lambda)$. 

Let $\pi$ be a cuspidal automorphic representation of $\GSp(4,\A)$ as in Theorem \ref{21}, we say that $\pi$ is \emph{exceptional} at a prime $\lambda$ if the image of $\overline{\rho}_{\pi,\lambda}^{proj}$ is neither $\PSp(4,\F_{\ell^s})$ nor $\PGSp(4,\F_{\ell^s})$ for some integer $s > 0$. 
On the other hand, such $\pi$ will be called \emph{genuine}, if it is neither a symmetric cube lift from $\GL(2)$, nor an automorphic induction after lift to $\GL(4)$. The rest of this section is devoted to prove the following result.

\begin{theo}\label{prin}
Let $\pi$ be a genuine cuspidal automorphic representation of $\GSp(4,\A)$ satisfying the hypotheses in the beginning of the Section \ref{lasec}. Then $\pi$ is exceptional at most at a set of primes of density zero.
\end{theo}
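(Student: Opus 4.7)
The plan is to work with the symplectic compatible system $(\rho_{\pi,\lambda})_\lambda$ furnished by Theorem \ref{21}, which exists because the strong generic functorial lift $\Pi$ of $\pi$ to $\GL(4,\A)$ is RAESDC, and to classify the maximal subgroups of $\PGSp(4,\F_{\ell^s})$ via an Aschbacher-Kleidman-Liebeck-style theorem (Mitchell's classification in this low rank). For each maximal class different from $\PSp(4,\F_{\ell^s})$ and $\PGSp(4,\F_{\ell^s})$, I would show that the set of primes $\lambda$ at which $\overline{\rho}_{\pi,\lambda}^{proj}$ lands in that class has density zero; the union of these classes is then still a density-zero set, which proves the theorem.

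First I would establish absolute residual irreducibility for $\overline{\rho}_{\pi,\lambda}$ on a density-one set of primes, invoking the recent residual-irreducibility theorems for compatible systems attached to RAESDC representations (applied to $\Pi$) together with the regularity of the Hodge-Tate weights from Theorem \ref{21}.iii); this immediately kills the reducible classes (stabilizers of isotropic lines or Lagrangian planes). Next I would rule out the imprimitive case: if $\overline{\rho}_{\pi,\lambda}$ is induced from a subgroup of index $2$ inside $G_\Q$, a standard compatible-system propagation argument of Dieulefait upgrades this to an analogous structure on $\rho_{\pi,\lambda}$ itself, exhibiting $\Pi$ as an automorphic induction from $\GL(2)$ over a quadratic extension, in contradiction with $\pi$ being genuine. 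For images preserving a tensor decomposition $V_1\otimes V_2$ (the class $\Sp(2)\otimes\Sp(2)\subset\Sp(4)$ coming from the exceptional isogeny $\SL(2)\times\SL(2)\to\SO(4)$), a similar lifting argument would force $\Pi$ to be isobaric of the form attached to a pair of $\GL(2)$ representations and $\pi$ to be (a twist of) an endoscopic lift, contradicting our hypothesis. If the image sits inside $\PGL(2)$ embedded via $\Sym^3$, then the same propagation realises $\pi$ as a symmetric cube lift from $\GL(2)$, again contradicting "genuine". The field-extension class (image contained in $\PGSp(4,\F_{\ell^{s/2}}).2$ after a possible twist) is dealt with by the same inner-twist/compatible-system considerations and survives only on a density-zero set.

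Finally, for the extraspecial normalizers and for the almost simple exceptional maximal subgroups (the finite list including $2.A_6$, $2.S_6$, $\PSL(2,q)$ for small $q$, etc.), the order of the image is bounded absolutely. Combining this with the fact that the conductor of $(\rho_{\pi,\lambda})_\lambda$ is independent of $\lambda$ (the remark after Theorem \ref{21}) and a lower bound on the order of $\overline{\rho}_{\pi,\lambda}(\Frob_p)$ coming from the distinctness of the Hodge-Tate weights (via Fontaine-Laffaille theory for large unramified $\ell$), these residual possibilities occur only on finite sets of primes. I expect the main obstacle to be the passage from residual structural statements (residual induction, residual tensor decomposition, residual symmetric-cube image) to the corresponding global statements on $\rho_{\pi,\lambda}$ and hence on $\Pi$; this is exactly the step where the recent residual-irreducibility and deformation-theoretic compatible-system techniques for $\GL(4)$ need to be deployed carefully, and it is the reason one only obtains density zero rather than finiteness of the exceptional set in general.
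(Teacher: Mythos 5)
Your proposal follows essentially the same route as the paper: pass to the RAESDC lift $\Pi$, invoke residual irreducibility on a density-one set (CG14 + BLGGT14), and run through the Aschbacher--Mitchell classification of maximal subgroups of $\GSp(4,\F_{\ell^r})$ case by case. Three remarks on the details.

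First, the ``tensor-decomposable'' class you raise is a misstep: $\Sp(2)\otimes\Sp(2)$ preserves a \emph{symmetric} form and lands in $\SO(4)$, not in $\Sp(4)$ (the relevant $C_4$-type subgroup of $\Sp(4)$ would be $\Sp(2)\otimes O(2)$, which is small and is absorbed into the other classes in the version of the classification the paper quotes). So this case does not appear as a separate item and does not need the endoscopic-lift argument you sketch.

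Second, the ``standard compatible-system propagation argument'' you invoke is where the real work sits, and for the symmetric-cube case it is not just the Frobenius-trace argument. For the induced case, the propagation is exactly a trace-vanishing argument: $\Tr\,\overline{\rho}_{\pi,\lambda}(\Frob_p)\equiv 0$ for inert $p$ for infinitely many $\lambda$ forces $\Tr\,\rho_{\pi,\lambda}(\Frob_p)=0$, hence a self-twist $\rho_{\pi,\lambda}\cong\rho_{\pi,\lambda}\otimes\eta$, hence $\Pi\cong\Pi\otimes\eta$ by strong multiplicity one, hence automorphic induction by Arthur--Clozel. For the $\Sym^3$ case, however, one must actually identify the residual $2$-dimensional $\sigma_\lambda$: the paper first uses the Fontaine--Laffaille description of $\overline{\rho}_{\pi,\lambda}|_{I_\ell}$ (Proposition \ref{iner}) to show $\Sym^3$-type images can only arise when the weight is of the special form $(2m_2,m_2)$, and then invokes Serre's modularity conjecture (Khare--Wintenberger) to realise $\sigma_\lambda$ as $\overline{\rho}_{f_\lambda,\lambda}$ for a newform $f_\lambda$ of bounded weight and level, after which a pigeonhole argument fixes $f$ and the global identification follows. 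Your sketch does not supply this ingredient.

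Third, your final diagnosis is inverted: in the paper, the residual-to-global propagation steps (induced, $\Sym^3$, small exceptional groups) all yield a \emph{finite} exceptional set; it is only the reducible case---handled via the CG14/BLGGT14 density-one irreducibility theorem---that produces the density-zero (as opposed to finite) conclusion. So the obstruction to finiteness is the irreducibility analysis at ramified primes, not the propagation step. (Cf.\ Remark \ref{ramifi}, where additional Iwahori-spherical hypotheses do give finiteness.)
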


The proof is inspired by \cite{Di02} where the case of genuine cuspidal automorphic representations of $\GSp(4,\A)$ of level $1$ and parallel weight was proved.
As in Dieulefait's paper, the proof is done by considering all possible images of $\overline{\rho}^{proj}_{\pi, \lambda}$, given by the classification of maximal subgroups of $\GSp(4, \F_{\ell^r})$. Such classification was first provided by Mitchell in \cite{mi14}. However, we use a more modern formulation due to Aschbacher \cite{Asc84} which is as follow.

\begin{theo}\label{clasific}
Let $\ell$ be a odd rational prime and $r$ be an integer $>0$. Let $G$ be a maximal subgroups of $\GSp(4,\F_{\ell^r})$ which does not contain $\Sp(4,\F_{\ell^r})$. Then at least one of the following holds: 
\begin{enumerate}
\item $G$ stabilizes a totally singular or a non-singular subspace;
\item $G$ stabilizes a decomposition $\F_{\ell^r}^4 = V_1 \oplus V_2$, $\dim(V_i) = 2$;
\item $G$ stabilizes a structure of $\F_{\ell^{2r}}$-vector space on $\F^4_{\ell^r}$; 
\item $G$ is a cross characteristic group of order smaller than $5040$;
\item the projectivization of $G$ is an almost simple group isomorphic to $\PGL(2,\F_{\ell^r})$;
\item the projectivization of $G$ is an almost simple group isomorphic to $\PSp(4,\F_{\ell^s})$ or $\PGSp(4,\F_{\ell^s})$, for some integer $s>0$ dividing $r$.
\end{enumerate}
\end{theo}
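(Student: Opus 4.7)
The plan is to deduce the statement from Aschbacher's general theorem on maximal subgroups of finite classical groups. Aschbacher's theorem partitions the maximal subgroups of $\GSp(4,\F_{\ell^r})$ into eight geometric classes $\mathcal{C}_1,\dots,\mathcal{C}_8$ (each defined by preservation of some natural structure on $\F_{\ell^r}^4$) together with a class $\mathcal{S}$ consisting of almost simple subgroups acting absolutely irreducibly and not preserving any of those structures. The strategy is to enumerate the contribution of each Aschbacher class in our specific low-dimensional symplectic setting and match them with the six items in the statement.

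I would walk through the classes as follows. Class $\mathcal{C}_1$, stabilizers of proper non-zero subspaces, combined with the presence of a non-degenerate alternating form, forces the invariant subspace to be either totally singular or non-degenerate, giving item (i). Class $\mathcal{C}_2$, stabilizers of direct-sum decompositions, must in dimension $4$ preserve a decomposition compatible with the symplectic form; up to permutation this gives $\F_{\ell^r}^4 = V_1 \oplus V_2$ with $\dim V_i = 2$, which is item (ii). Class $\mathcal{C}_3$, stabilizers of extension-field structures, applied to the degree-$2$ extension $\F_{\ell^{2r}}/\F_{\ell^r}$ gives item (iii). Classes $\mathcal{C}_4$ and $\mathcal{C}_7$ (tensor-product and tensor-induced structures) require dimension factorizations that are incompatible with the symplectic form on a $4$-space, or collapse into earlier classes. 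Class $\mathcal{C}_5$ (subfield subgroups) yields the $\PGSp(4,\F_{\ell^s})$ subgroups with $s \mid r$ in item (vi). The remaining geometric classes $\mathcal{C}_6$ (normalizers of extraspecial $2$-groups) and $\mathcal{C}_8$ (other classical subgroups) produce subgroups of bounded order which, for $\ell$ odd, are absorbed into item (iv).

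The remaining work, and the main obstacle, is the analysis of the class $\mathcal{S}$. Here the almost simple subgroup $H$ of $\GSp(4,\F_{\ell^r})$ acts irreducibly on $\F_{\ell^r}^4$, and one splits into two cases depending on whether the socle of $H$ is a finite simple group in defining characteristic $\ell$ or in cross characteristic. In the defining characteristic case, the classification of irreducible representations of quasi-simple groups of Lie type of dimension $\leq 4$ shows that the only possibilities are $\PSL(2,\F_{\ell^r})$ acting by $\Sym^3$ of the standard representation, giving item (v), and $\PSp(4,\F_{\ell^s})$ for $s \mid r$, feeding into item (vi). In the cross characteristic case one uses the Landazuri--Seitz--Zalesskii bounds, or equivalently Mitchell's original list, to see that the resulting almost simple groups all have order at most $|2\cdot A_7| = 5040$, giving item (iv). Verifying this cross-characteristic bound case by case, using the classification of finite simple groups possessing a faithful projective representation of dimension $\leq 4$ in characteristic $\ell$ not dividing their order, is the most delicate point; modern references such as Kleidman--Liebeck make this explicit in the $\GSp_4$ case and can be cited directly.
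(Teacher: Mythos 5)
Your proposal follows exactly the route the paper takes: the paper does not argue the classification from scratch but derives it by specialising Aschbacher's theorem \cite{Asc84} to $\GSp(4)$, referring to Chapter~2 and Chapter~4 (and, implicitly, Tables~8.12--8.13) of \cite{BHRD13} for the class-by-class verification you spell out. Two small points of precision that do not affect the conclusion: for $\ell$ odd the class $\mathcal{C}_8$ is in fact empty for $\Sp(4,\F_{\ell^r})$ rather than merely bounded, and the subfield groups $\PSp(4,\F_{\ell^s})$ with $s<r$ already fall in $\mathcal{C}_5$ rather than in $\mathcal{S}$, so the defining-characteristic part of $\mathcal{S}$ contributes only the $\Sym^3$ image of item~(v).
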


For more details and relevant definitions see Chapter 2 and Chapter 4 of \cite{BHRD13}. The Aschbacher's classification is in fact a much more general result, which give a classification of maximal subgroups of all the finite classical groups.   

When $\ell-1 > m_1 + m_2 + 3$ and $\ell \notin S$, we have from Theorem \ref{21} that $\rho_{\pi,\lambda}$ is crystalline with Hodge-Tate weights $\{ 0, m_2+1, m_1+2, m_1+m_2 +3 \}$. Then we have the following result which follows from Fontaine-Laffaille theory \cite[Theorem 5.3]{FL82} (see also Section 3.1 of \cite{Ur}).

\begin{prop}\label{iner}
Let $\pi$ be a cuspidal automorphic representation of $\GSp(4, \A)$ as in Theorem \ref{21}. Then for all prime $\ell \notin S$, such that $\ell-1 > m_1 + m_2 + 3$, we have the following possibilities for the action of the inertia group at $\ell$:
\begin{equation*}
\begin{split}
\overline{\rho}^{ss}_{\pi, \lambda} \vert _{I_{\ell}} \simeq &  \diag(\chi^{m_1+m_2+3},\chi^{m_1+2}, \chi^{m_2+1}, 1),\\
 & \diag(\chi^{m_1+2}, \chi^{m_2+ 1}, \psi^{(m_1+m_2+3)\ell}, \psi^{m_1+m_2+3}), \\
 & \diag( \psi^{(m_1+ 2)+(m_2 + 1) \ell}, \psi^{(m_2+1) + (m_1 + 2)\ell}, \chi^{m_1+m_2+3}, 1 ), \\
  & \diag(\psi^{(m_1+2) + (m_2 + 1)\ell}, \psi^{(m_2+1) + (m_1+2)\ell}, \psi^{(m_1+m_2 + 3)\ell}, \psi^{m_1+m_2+3}),
\end{split}
\end{equation*}
where $\chi$ is the cyclotomic character and $\psi$ denotes a fundamental character of level 2. 
\end{prop}

We remark that the particular choice of the exponents and the fundamental characters in the previous result are deduced from the fact that the roots of the characteristic polynomial of an element of $\GSp(4)$ come in two pairs having the same product.

Now, we are ready to give the proof of Theorem \ref{prin}, which will be given by considering the following cases:


\subsection{Reducible images}

In this section, we will deal with the reducible cases. We remark that, instead of following Dieulefait's proof (which depends on the generalized Ramanujan's conjecture and Serre's conjecture), we use the recent results of $\cite{BLGGT14}$ and $\cite{CG14}$ about reducibility of compatible systems. 

First, recall that in the proof of Theorem \ref{21} we define the compatible system of Galois representations $\rho_{\pi, \lambda}$ associated to $\pi$ as the compatible system of Galois representation $\rho_{\Pi, \lambda}$  associated to a RAESDC automoprihc representation $\Pi$ of $\GL_4(\A)$ via the generic Langlands transfer from $\GSp(4)$ to $\GL(4)$.
By Theorem 3.2 of \cite{CG14}, we have that the representations $\rho_{\Pi, \lambda}$ on the compatible system attached to a RAESDC automorphic representation $\Pi$ of $\GL(4, \A)$ are absolutely irreducible for a set of primes $\lambda$ of density one. Then, by Proposition 5.3.2 of \cite{BLGGT14}, we have that the residual representations $\overline{\rho}_{\Pi,\lambda}$ are irreducible for a set of primes $\lambda$ of density one. Thus, we can conclude that the reducible cases can only happen for a set of primes of density zero. 

\begin{rema}\label{ramifi}
As the results of $\cite{BLGGT14}$ and $\cite{CG14}$ (see also $\cite{CG}$) only works for a set of density one of primes, we are not able to prove in general that $\pi$ is non exceptional for all but finitely many primes $\lambda$. 
However partial results are well known. For example, in \cite{Di02} the first author proved that if $\pi$ is of parallel weight $k$ and such that $\pi_p$ is spherical for all prime $p$ (i.e. $S = \emptyset$) then $\overline{\rho}_{\pi,\lambda}$ is irreducible for all but finitely many primes $\lambda$. 
In fact, the method of loc. cit. works when $\pi$ is of parallel weight $k$ and the Galois representations associated to it are semistable at primes in $S$ (see Theorem 2.3 of \cite{Di07}). 

Recently, in \cite[Section 5]{We}, has been proved that the representations $\overline{\rho}_{\pi,\lambda}$ are irreducible for all but finitely many primes $\lambda$ without conditions on $S$. Then a strong version of our result is now available.
\end{rema}


\subsection{Image equal to a group having a reducible index two subgroup}
Assume that we are in the case $ii)$ or $iii)$ of Theorem \ref{clasific}. In these cases $\overline{\rho}_{\pi,\lambda}$ is the induction of some 2-dimensional representation $\sigma_\lambda$ of $G_L$ that is not the restriction of a $2$-dimensional representation of $G_\Q$, for $L$ a quadratic extension of $\Q$. 
Now, assume that for infinitely many primes $\lambda$
\[
\rho_{\pi,\lambda} \equiv \Ind_L^{\Q}(\sigma_\lambda)\mod \lambda. 
\]
A priori $L$ and $\sigma_\lambda$ depend on the prime $\lambda$. By using the description of the image of inertia at $\ell$ given in Proposition $\ref{iner}$, we have that $L$ is unramified at $\ell$ for $\ell$ sufficiently large and by Dirichlet principle we can assume without loss of generality that $L$ is independent of $\lambda$ (see the arguments in Section 3 of \cite{DN11}). Since this induced representation is irreducible (because the reducible case has been covered before), we have that 
\[
\Tr(\overline{\rho}_{\pi,\lambda} (\Frob_p)) \equiv 0 \mod \lambda
\]
for all $p\notin S \cup \{ \ell \}$ inert in $L$. Since this holds for infinitely many
primes $\lambda$, we have that $\rho_\lambda = \Ind_L^{\Q} (\sigma'_\lambda)$, for some two-dimensional family of $\lambda$-adic representations $\{ \sigma_\lambda ' \}$ of $G_L$, and that
\[
\rho_{\pi,\lambda} = \rho_{\pi,\lambda} \otimes \eta
\]
for all $\lambda$, where $\eta$ is the quadratic character of $L/\Q$ (see Section 4.2 of \cite{DN11}). Then, since $\rho_{\pi,\lambda} = \rho_{\Pi,\lambda}$ for some RAESDC automorphic representation $\Pi$ of $\GL(4, \A)$, by strong multiplicity one for $\GL(4)$ (see \cite{JS81}), we have that
\[
\Pi = \Pi \otimes \eta.
\]
By applying Theorem 4.2 (p. 202) of \cite{AC89}, we deduce that $\Pi$ is an automorphic induction from the quadratic field $L/\Q$. Hence $\pi$ is not genuine. Therefore, these cases of our classification of maximal subgroups of $\GSp(4,\F_{\ell^r})$ can only happen for finitely many primes $\lambda$. 


\subsection{Image equal to the stabilizer of a twisted cubic}

Now, we will deal whit the case $v)$ of Theorem \ref{clasific}.  In this case all matrices are of the form (see page 233 of \cite{Hi85} and Proposition 5.3.6.i of \cite{BHRD13}):
\[
\Symm ^3 \left( \begin{array}{cc}
a & c \\
b & d \end{array} \right)
=
\left( \begin{array}{cccc}
a^3 & a^2c & ac^2 & c^3 \\
3a^2b & a^2d+2abc & bc^2+2acd & 3c^2d \\
3ab^2 & b^2c+2abd & ad^2+2bcd & 3cd^2 \\
b^3 & b^2d & bd^2 & d^3 \end{array} \right),
\]
then
\begin{equation}\label{cubis}
\rho_{\pi,\lambda} \equiv \Symm^3(\sigma_\lambda) \mod \lambda,
\end{equation}
where $\sigma_\lambda$ is a 2-dimensional Galois representation.
Assume that for infinitely many primes $\lambda$ the congruence (\ref{cubis}) is satisfied. If we suppose that $\ell \notin S$ and $\ell -1 > m_1+m_2+3$, comparing the structure of $\Symm^3(\sigma_\lambda)$ with the four possibilities for the image of inertia subgroup at $\ell$ given in the Proposition \ref{iner}, we have that this case can only happen if the weight of $\pi$ is of the form $(2m_2,m_2)$.  We remark that this affirmation is independent of the choice of basis because the eigenvalues of a matrix $\Symm ^3(M)$, $M \in \GL(2)$, is of the form $\alpha^3, \alpha^2 \beta, \alpha \beta^2, \beta^3$ where $\alpha$, $\beta$ are the eigenvalues of $M$ and our comparison just depend on the eigenvalues.
So, in this case we have that the residual mod $\lambda$ representation $\sigma_\lambda$, when restricted to the inertia group at $\ell$, is as follows:
\[
\left( \begin{array}{cc}
\chi^{m_2+1} & * \\
0 & 1 \end{array} \right)
\quad \mbox{ or }\quad
\left( \begin{array}{cc}
\psi ^{(m_2+1)\ell} & 0 \\
0 & \psi^{m_2+1} \end{array} \right).
\]
Then, by Serre's conjecture \cite{Se87} (which is now a theorem, cf. \cite{KW}, \cite{KWb} and \cite{Di12}), for every $\lambda$ that falls in this case, there is a classical cuspidal Hecke eigenform $f_\lambda$ of weight $m_2+2$ and level $N$ such that
\[
\rho_{\pi,\lambda} \equiv \Symm^3(\sigma_{f_\lambda,\lambda}) \mod \lambda,
\]
where $N$ divides the conductor of the compatible system attached to $\pi$. Then we have finitely many possibilities for the modular form $f_\lambda$, and by the Dirichlet principle, we can assume that $f_\lambda =f$ is independent of $\lambda$. Thus we have that 
\[
\rho_{\pi,\lambda} \equiv \Symm^3(\sigma_{f,\lambda}) \mod \lambda,
\] 
for infinitely many $\lambda$, therefore $\rho_{\pi,\lambda} = \Symm^3(\sigma_{f,\lambda})$ for all $\lambda$. Then, as in the previous case, by strong multiplicity one theorem $\pi$ must be the symmetric cube of some cusp form and $\pi$ is not genuine. Hence we can have image the stabilizer of a twisted cubic at most for finitely many primes $\lambda$. 


\subsection{The rest of exceptional images}
Finally, we will deal with the case $iv)$ of Theorem \ref{clasific}. In this case, comparing the exceptional groups $G \subset \Sp(4,\F_{\lambda})$ (its order and structure, see Table 8.12 and Table 8.13 of \cite{BHRD13}) with the fact that the image of $\overline{\rho}_{\pi,\lambda}^{proj}$ contains the image of $\overline{\rho}^{proj}_{\pi, \lambda} \vert _{I_\ell}$ (assuming $\ell \notin S$ and $\ell -1 > m_1 + m_2 + 3$) described in Proposition \ref{iner}, we concluded that this case can only happen for finitely many primes $\lambda$.
  
\subsection{Conclusion}

Having gone through all cases in Theorem \ref{clasific} (except $vi)$) we conclude that, if $\pi$ is genuine satisfying the hypotheses in the beginning of the Section \ref{lasec}, we have at most a set of primes $\lambda$ of density zero where $\pi$ is exceptional.


\section{Maximally induced representations}\label{se3}

It was observed by Khare and Wintenberger \cite{KW} (see also \cite{KLS}) that the existence of exceptional primes in a compatible system can be avoided by imposing certain local conditions on the Galois representations. 
More precisely, let $p,q \geq 5$ be distinct primes such that $p \equiv 1 \mod 4$ and the order of $q$ mod $p$ is $4$. Denote by $\Q_{q^4}$ the unique unramified extension of $\Q_q$ of degree $4$. Recall that $\Q^{\times}_{q^4} \simeq \mu_{q^4-1} \times U_1 \times q^{\Z}$, where $\mu_{q^4-1}$ is the group of $(q^4-1)$-th roots of unity and $U_1$ is the group of 1-units. We consider a character $\chi_q : \Q^{\times}_{q^4} \rightarrow \overline{\Q}^{\times}_\ell$, such that:
\begin{enumerate}
\item $\chi_q$ has order $2p$,
\item $\chi_q (q) = -1$, and
\item $\chi_q \vert _{\mu_{q^4-1} \times U_1}$ is of order $p$.
\end{enumerate}
By local class field theory, we can regard $\chi_q$ as a character (which by abuse of notation we call also $\chi_q$) of $G_{\Q_{q^4}}$ or of $W_{\Q_{q^4}}$.
In \cite{KLS} it is proved that the representation $ \rho_q = \Ind^{G_{\Q_q}}_{G_{\Q_{q^4}}} (\chi_q)$ is irreducible and symplectic, in the sense that it can be conjugated to take values in $\Sp(4, \overline{\Q}_\ell)$.

Let $\ell \neq p$, $\alpha: G_{\Q_q} \rightarrow \overline{\Q}_\ell^\times$ be an unramified character, and $\overline{\chi}_q$ (resp. $\overline{\alpha}$) be the composite of $\chi_q$ (resp. $\alpha$) and the projection $\overline{\Z}_\ell \twoheadrightarrow \overline{\F}_\ell$.
Note that the image of the reduction $\overline{\rho}_q$ of $\rho_q$ in $\GL(4,\overline{\F}_\ell)$ is $\Ind^{G_{\Q_q}}_{G_{\Q_{q^4}}} (\overline{\chi}_q)$ which is an irreducible representation and the representation $ \overline{\rho}_q \otimes \overline{\alpha}$ is irreducible too.

\begin{defi}\label{tm}
Let $p,q,\ell$ be primes and $\chi_q, \alpha$ be characters, all as above. We say that a Galois representation 
\[
\rho_\ell :G_\Q \rightarrow \GSp(4, \overline{\Q}_\ell),
\] 
is \emph{maximally induced} at $q$ of order $p$ if the restriction of $\rho_\ell$ to a decomposition group at $q$ is equivalent to $\Ind^{G_{\Q_q}}_{G_{\Q_{q^4}}} (\chi_q) \otimes \alpha.$
\end{defi} 

\begin{rema}\label{prim}
Let $N \in \N$. Note that, if we choose a prime $p \equiv 1 \mod 4$ greater than $\max\{N, 13\}$, then Chevotarev's density theorem allows us to choose a prime $q \geqslant 5$ (from a set of positive density) which splits completely in $\Q(i,\sqrt{p_1}, \ldots , \sqrt{p_m})$ (where $p_1, \ldots , p_m$ are the prime divisors of $N$) and such that $q^2 \equiv -1 \mod p$.
\end{rema}

\begin{theo}\label{rib2}
Let $N$, $p$, and $q$ as in Remark \ref{prim}. Let $k$ be a positive integer and $\ell \neq p,q$ be a prime such that $\ell > 24k + 1$ and $\ell \nmid N$.
Let $\rho_\ell: G_\Q \rightarrow \GSp(4,\overline{\Q}_\ell)$ be a Galois representation, which ramifies only at the primes dividing $Nq\ell$, and  such that a twist of $\overline{\rho}_\ell$ by some power of the cyclotomic character is regular in the sense of Definition 3.2 of \cite{ADW14} with tame inertia weights at most $k$. 
If $\rho_\ell$ is maximally induced at $q$ of order $p$, then the image of $\overline{\rho}_\ell^{proj}$ is $\PSp(4,\F_{\ell^s})$ or $\PGSp(4,\F_{\ell^s})$ for some integer $s>0$.
\end{theo}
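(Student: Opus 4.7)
The plan is to apply the Aschbacher classification of maximal subgroups of $\GSp(4,\F_{\ell^r})$ (Theorem \ref{clasific}) to the image of $\overline{\rho}_\ell$ and rule out cases (i)--(v), thereby forcing case (vi). Two local inputs drive the argument. At $q$, the maximal induction hypothesis implies that the image of $\overline{\rho}_\ell\vert_{I_q}$ is generated by an element of order $p$ whose four eigenvalues form the Galois orbit $\{\zeta,\zeta^q,\zeta^{q^2},\zeta^{q^3}\}=\{\zeta^{\pm 1},\zeta^{\pm q}\}$ (using $q^2\equiv -1\pmod p$ from Remark \ref{prim}), while Frobenius cycles the corresponding eigenspaces. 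At $\ell$, the bound $\ell>24k+1$ together with regular tame inertia weights at most $k$ pins down the image of $\overline{\rho}_\ell\vert_{I_\ell}$ as a large diagonal torus, in the style of Proposition \ref{iner}.

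Case (i), reducibility, is dispatched at once since $\overline{\rho}_\ell\vert_{D_q}$ is the induction of an irreducible character from the index-$4$ subgroup $G_{\Q_{q^4}}$, and so is irreducible. Case (iv) (cross-characteristic groups of order less than $5040$) cannot contain an element of order $p$, as $p>\max\{N,7\}$ and $p\equiv 1\pmod 4$ force $p\geq 13$; any borderline orders are handled by a direct check against Tables 8.12--8.13 of \cite{BHRD13} combined with the structure of $\overline{\rho}_\ell\vert_{I_\ell}$. Case (v), where the projective image lies in $\Sym^3(\PGL(2,\F_{\ell^r}))$, is ruled out by an eigenvalue count: $\Sym^3$ of $\diag(a,b)$ has eigenvalues $(a^3,a^2b,ab^2,b^3)$ in geometric progression of common ratio $b/a$, whereas a direct enumeration of the six cyclic orderings of $\{\zeta^{\pm 1},\zeta^{\pm q}\}$ shows that no length-$4$ geometric progression is possible unless $p$ divides $2$, $5$ or $8$, all excluded by $p\geq 13$.

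For the imprimitive cases (ii) and (iii), suppose $\overline{\rho}_\ell\cong\Ind_L^\Q\sigma$ for some quadratic extension $L/\Q$ (case (iii), a field-of-scalars extension, is treated analogously). I analyse how $q$ behaves in $L$. If $q$ splits in $L$, then $\overline{\rho}_\ell\vert_{D_q}$ is reducible, contradicting the above. If $q$ ramifies in $L$, writing $\mathfrak{q}$ for the prime above $q$, Mackey's formula expresses $\overline{\rho}_\ell\vert_{G_{L_\mathfrak{q}}}$ both as $\sigma\vert_{G_{L_\mathfrak{q}}}\oplus(\sigma\vert_{G_{L_\mathfrak{q}}})^\tau$, a reducible $4$-dimensional representation, and as $\Ind^{G_{L_\mathfrak{q}}}_{G_{L_\mathfrak{q}\cdot\Q_{q^4}}}(\overline{\chi}_q)$, which is irreducible because the four Galois conjugates $\{\overline{\chi}_q^{q^i}\}$ remain pairwise distinct on the index-$2$ subgroup $I_{L_\mathfrak{q}\cdot\Q_{q^4}}\subset I_{\Q_{q^4}}$; contradiction. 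The remaining inert case is the most delicate: the quadratic character $\eta_{L/\Q}$ is unramified at $q$ and thus ramified only within primes dividing $N\ell$. By Remark \ref{prim}, $q$ splits completely in $\Q(i,\sqrt{p_1},\ldots,\sqrt{p_m})$, a field that contains every quadratic extension of $\Q$ unramified outside $\{\infty,2,p_1,\ldots,p_m\}$; combined with $L$ being unramified at $2$ (since $2\nmid Nq\ell$), this forces $L$ to be ramified at $\ell$, i.e., $L=\Q(\sqrt{\ell^*})$ with $\ell^*=(-1)^{(\ell-1)/2}\ell$.

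The residual inert subcase with $L=\Q(\sqrt{\ell^*})$ is the main obstacle of the proof. My plan is to show that induction from such a totally ramified quadratic extension at $\ell$ is incompatible with the regularity and the bound $k<(\ell-1)/24$ on the tame inertia weights. Concretely, if $\sigma\vert_{I_{L_\ell}}$ has weights $a,b$ with respect to the fundamental character of $L_\ell$, then the induction to $I_\ell$ contributes a shift of $(\ell-1)/2$ to half of the weights when they are re-expressed via the fundamental character of $\Q_\ell$, so some weight of $\overline{\rho}_\ell\vert_{I_\ell}$ must be at least $(\ell-1)/2>12k$, violating the bound $\leq k$. With cases (i)--(v) all excluded, Theorem \ref{clasific} then forces $\overline{\rho}_\ell^{proj}$ to equal $\PSp(4,\F_{\ell^s})$ or $\PGSp(4,\F_{\ell^s})$ for some integer $s>0$.
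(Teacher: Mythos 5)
Your proposal follows the same overall strategy as the paper: run through the Aschbacher classification (Theorem \ref{clasific}) and eliminate cases (i)--(v), using the irreducibility of $\overline{\rho}_\ell\vert_{D_q}$ for (i), a Mackey-type analysis at $q$ combined with the bound $\ell>24k+1$ at $\ell$ for (ii)--(iii), and the presence of a $p$-element for (iv). In the induced cases your split/ramified/inert trichotomy is a repackaging of what the paper does by citing Propositions 3.4 and 3.5 of \cite{ADW14}; your sketch of the ``$(\ell-1)/2$ shift of tame weights'' for the ramified-at-$\ell$ subcase is the content of their Proposition 3.4, so this is the same route made explicit (note that $L$ need not literally be $\Q(\sqrt{\ell^*})$, but only ramified at $\ell$, which is what the local argument actually uses). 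The one place where your argument is genuinely different is case (v): the paper invokes Dickson's classification of maximal subgroups of $\PGL(2,\F_{\ell^r})$ and checks that the projective image of $\Ind^{G_{\Q_q}}_{G_{\Q_{q^4}}}\overline{\chi}_q$ fits into none of Borel/dihedral/$A_4$/$S_4$/$A_5$/$\PSL_2$/$\PGL_2$, whereas you give a purely eigenvalue-theoretic exclusion: $\Sym^3$ of a diagonalizable matrix has eigenvalues in geometric progression, while the inertial eigenvalue set $\{\zeta^{\pm 1},\zeta^{\pm q}\}$ (with $q^2\equiv -1\bmod p$) cannot be a four-term geometric progression unless $p\mid 10$, excluded by $p\geq 13$. (Your parenthetical ``$p$ divides $2$, $5$ or $8$'' should be ``$p\mid 10$,'' but this is immaterial.) Your version is more elementary and self-contained; the paper's is more in keeping with the group-theoretic machinery already in play. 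Both are correct.
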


\begin{proof}
We will closely follow the proof of Theorem 1.5 of \cite{ADW14}. As in the previous section we will proceed by cases. 

\subsection{Reducible cases}

Since $\rho_\ell$ is maximally induced at $q$ and $\ell \neq p$, $\overline{\rho}_\ell \vert _{D_q}$ is absolutely irreducible. Hence $\overline{\rho}_\ell$ is absolutely irreducible and the reducible cases in the classification of maximal subgroups of $\GSp(4,\F_{\ell^r})$ cannot happen.


\subsection{Induced cases}\label{indus}

Now suppose that the image of $\overline{\rho}_\ell$ corresponds to an irreducible subgroup inside some of the maximal subgroups in cases $ii)$ or $iii)$ of Theorem \ref{clasific}. 
Because this case is very similar to Lemma 3.7 of \cite{ADW14}, we will omit some details. In these cases there exist a quadratic extension $L \subseteq \Q(i, \sqrt{\ell}, \sqrt{q}, \sqrt{p_1}, \ldots, \sqrt{p_m})$ (where $p_1, \ldots , p_m$ are the prime divisors of $N$) with Galois group $H = \Gal(\overline{\Q}/L) \leq G_\Q$ and a representation $\overline{\sigma}_\ell : H \rightarrow \GL(2, \overline{\F}_\ell)$ such that 
\[
\overline{\rho}_\ell \cong \Ind _H ^{G_\Q} (\overline{\sigma}_\ell). 
\]

Applying Mackey's formula to $\Res _{G_{\Q_q}}^{G_\Q} \left(\Ind^{G_\Q}_H (\overline{\sigma}_\ell)\right)$ (which is irreducible because we know that $\Res _{G_{\Q_q}}^{G_\Q} \left(\Ind^{G_\Q}_H (\overline{\sigma}_\ell)\right) = \Ind^{G_{\Q_q}}_{G_{\Q_{q^4}}} (\overline{\chi}_q) \otimes \overline{\alpha}$) we have that 
\[
\Ind^{G_{\Q_q}}_{G_{\Q_q}  \cap H} \left(\Res^H_{G_{\Q_q} \cap H} (\overline{\sigma}_\ell)\right) = \Ind^{G_{\Q_q}}_{G_{\Q_{q^4}}} (\overline{\chi}_q) \otimes \overline{\alpha}.
\]
Then, from Proposition 3.5 of \cite{ADW14}, it follows that $G_{\Q_{q^4}} \leq  G_{\Q_q} \cap H  = \Gal(\overline{\Q}_q / L_{\mathfrak{q}})$, where $\mathfrak{q}$ is a prime of $L$ above $q$. Thus
\[
\Q_q \subseteq L_{\mathfrak{q}} \subseteq \Q_{q^4} \subseteq \overline{\Q}_q,
\]  
and hence $L$ cannot ramify at $q$ since $\Q_{q^4}$ is an unramified extension of $\Q_q$. 
Moreover, note that
\[
4= \dim (\overline{\rho}_\ell) = \dim \left(\Ind ^{G_\Q} _H (\overline{\sigma}_\ell)\right) = (G_\Q: H) \dim (\overline{\sigma}_\ell)
\]
and 
\[
4 = \dim \left( \Ind^{G_{\Q_q}}_{G_{\Q_q}  \cap H} \left( \Res^H_{G_{\Q_q} \cap H} (\overline{\sigma}_\ell)\right) \right) = (G_{\Q_q}: G_{\Q_q} \cap H ) \dim (\overline{\sigma}_\ell),  
\]
hence $[ L_\mathfrak{q} : \Q_q] = (G_{\Q_q}: G_{\Q_q} \cap H ) = (G_\Q: H) = [L: \Q]$. Therefore, $q$ is inert in $L/\Q$. 

On the other hand, as  $\overline{\rho}_\lambda$ is regular with tame inertia weights at most $k$ and $\ell$ is greater than $24k+1$, by Proposition $3.4$ of \cite{ADW14}, $L$ cannot ramify at $\ell$.
Then, $L \subseteq \Q(i, \sqrt{p_1}, \ldots, \sqrt{p_m})$ and therefore, by assumption, $q$ is split in $L$. Thus we have a contradiction.


\subsection{Symmetric cube case}
 
In order to deal with the case $v)$ of Theorem \ref{clasific} we will use the well-known Dickson's classification of maximal subgroups of $\PGL(2,\F_{\ell^r})$ which states that they can be either a group of upper triangular matrices, a dihedral group $D_{2n}$ (for some integer $n$ not divisible by $\ell$), $\PSL(2, \F_{\ell^s})$, $\PGL(2, \F_{\ell^s})$ (for some integer $s$ dividing $r$), $A_4$, $S_4$ or $A_5$.

Let $G_q$ be the projective image of $\Ind^{G_{\Q_q}}_{G_{\Q_{q^4}}} (\overline{\chi}_q)$. If $G_q$ is contained in a group of upper triangular matrices, this is contained in fact in the subset of diagonal matrices because $\ell$ and $2p$ are coprime. But we know that $G_q$ is non-abelian, then this cannot be contained  in a group of upper triangular matrices.
Moreover, $G_q$ cannot be contained in $A_4$, $S_4$ or $A_5$ because we have chosen $p>13$.

Now, assume that $G_q$ is contained in a dihedral group.  As any subgroup of a dihedral group is either cyclic or dihedral, $G_q$ must be isomorphic to a cyclic or dihedral group. In fact, as $G_q$ is non-abelian, without loss of generality, we can assume that $G_q$ is isomorphic to a dihedral group $D_{2n}$. By definition of dihedral groups we know that a dihedral group of order $2n$ has an element of order $n$. On the other hand, we know that the order of $G_q$ is $4p$. Then, from the equality $4p =2n$, we have that $G_q \cong D_{2n}$ contain an element of order $n=2p$. But, by definition of $G_q$, its elements have order at most $p$, so we have a contradiction. Thus, $G_q$ cannot be contained in a dihedral group.

Hence $G_q$ should be $\PSL(2,\F_{\ell^s})$ or $\PGL(2,\F_{\ell^s})$ for some integer $s$. We know that the stabilizer of a twisted cubic can only occur when $\ell \geq 5$ in which case $\PSL(2,\F_{\ell^s})$ is a index 2 simple subgroup of $\PGL(2, \F_{\ell^s})$. But $G_q$ contains a normal subgroup (of order $p$) of index greater than 2. 
Therefore the case $v)$ in the classification of maximal subgroups of $\GSp(4,\F_{\ell^r})$ cannot occur.  


\subsection{The rest of exceptional cases}

Finally, the order of the groups in case $iv)$ of Theorem \ref{clasific} are $520$, $1440$, $1920$, $3840$ and $5040$. Then all these groups can be discarded by using the fact that the image of $\overline{\rho}^{proj}_{\ell}$ contains an element of order $p>13$.

\end{proof}


\section{Galois representations with large image}

The goal of this section is to prove a representation-theoretic result which gives us the local conditions needed to construct compatible systems without exceptional primes. Roughly speaking, the idea is to construct compatible systems which are maximally induced at two primes simultaneously. 
In order to do this, we start explaining how to choose such primes.

\begin{lemm}\label{pri2}
Let $k,N \in \N$ such that $281 \nmid N$, and $M$ be an integer greater than $N$ and $24k+1$. Let $p'=281$ and $p \equiv 1 \mod 4$ be a prime different from $p'$ and greater than $\max\{M,13 \}$. Then, we can choose two primes $q$ and $q'$ different from $p$ and $p'$ such that: 
\begin{enumerate}
\item $q$ and $q'$ are greater than $M$. 
\item $q'$ is a quadratic residue modulo $q$.
\item $q^2 \equiv -1 \mod p$ and $q'^2 \equiv -1 \mod p'$.
\item $q$ splits completely in $\Q(i,\sqrt{p_1}, \ldots , \sqrt{p_m})$, where $p_1, \ldots, p_m$ are the primes smaller than or equal to $M$.
\item $q'$ splits completely in $\Q(i,\sqrt{p'_1}, \ldots , \sqrt{p'_{m'}})$, where $p'_1, \ldots, p'_{m'}$ are the primes different from $p'$ and smaller than or equal to $M$.
\end{enumerate}
\end{lemm}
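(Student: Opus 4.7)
The plan is to apply Dirichlet's theorem on primes in arithmetic progressions twice in sequence, first to produce $q$ and then, using $q$, to produce $q'$. All the conditions on each prime are abelian splitting or congruence conditions, so they translate into prescribed residues modulo a finite collection of pairwise coprime moduli, which is exactly the setting in which Dirichlet applies.

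For $q$, I would first translate condition iv) via class field theory. The extension $K=\Q(i,\sqrt{p_1},\ldots,\sqrt{p_m})$ has Galois group isomorphic to $(\Z/2)^{m+1}$, so $q$ splits completely in $K$ iff its Frobenius is trivial in every quadratic subfield. This amounts to $\left(\frac{-1}{q}\right)=1$ (forcing $q\equiv 1\pmod 4$) and $\left(\frac{p_i}{q}\right)=1$ for every $p_i\leq M$. Using quadratic reciprocity (clean because of $q\equiv 1\pmod 4$) this is equivalent to $q$ lying in a prescribed union of residue classes modulo $N_1:=8\prod_{p_i\leq M,\, p_i\text{ odd}}p_i$. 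Condition iii) for $q$ requires $q$ to have order $4$ in $(\Z/p)^\times$, which cuts out exactly two residue classes modulo $p$; these exist because $p\equiv 1\pmod 4$. Since $p>M\geq p_i$ and $p$ is odd, $p$ is coprime to $N_1$; by CRT the joint condition defines a nonempty set of residue classes modulo $N_1 p$, all coprime to $N_1 p$, so Dirichlet's theorem supplies infinitely many primes $q$ satisfying iii) and iv). Pick one that is larger than $M$ and different from $p$ and $p'$ (finite exclusion).

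The construction of $q'$ is almost identical, with one extra ingredient. Condition v) translates into a congruence modulo $N_2:=8\prod_{p'_j\leq M,\, p'_j\neq p',\, p'_j\text{ odd}}p'_j$, and condition iii) for $q'$ cuts out two classes modulo $p'=281$ (legitimate because $281\equiv 1\pmod 4$). Condition ii), that $q'$ be a square modulo $q$, cuts out half of the nonzero residues modulo $q$. Because $q>M$ and $q\neq p'$, the prime $q$ is coprime to both $N_2$ and $p'$; together with $p'\nmid N_2$ (by construction) this makes the three moduli $N_2$, $p'$, $q$ pairwise coprime. CRT and Dirichlet then yield infinitely many primes $q'$ satisfying ii), iii) and v), from which we choose one that is $>M$ and distinct from $p$, $p'$, $q$.

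The argument has no real obstacle beyond careful bookkeeping; the only points that must be tracked are the pairwise coprimality of the moduli (ensured by $p,q>M$, the oddness of $p$ and $p'$, and the deliberate exclusion of $p'$ from the $p'_j$) and the existence of order-$4$ elements in $(\Z/p)^\times$ and in $(\Z/281)^\times$, both guaranteed by the respective congruence $\equiv 1\pmod 4$. The hypothesis $281\nmid N$ plays no role inside this lemma; it is recorded for later, when the local conditions at $p'$ are glued to the ramification data at the primes dividing $N$.
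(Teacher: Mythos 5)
Your argument is correct and reaches the same conclusion as the paper, but by slightly more elementary means. The paper's proof is a one-liner: it invokes Chebotarev's density theorem for the compositum of the abelian fields $\Q(\zeta_p)$, $\Q(\zeta_{p'})$, $\Q(\sqrt{q})$, and $\Q(i,\sqrt{p'_1},\ldots,\sqrt{p'_{m'}})$, observing that these are linearly disjoint over $\Q$ (with the sequential choice of $q$ before $q'$ left implicit). You instead translate every splitting and order condition into explicit congruences via quadratic reciprocity and run Dirichlet's theorem on arithmetic progressions, once for $q$ and once for $q'$. Since every field involved is abelian over $\Q$, the two routes are logically equivalent --- Dirichlet is Chebotarev for cyclotomic fields --- but yours makes fully explicit what the paper compresses: the pairwise coprimality of the moduli (forced by $p,q>M$ and the deliberate exclusion of $p'$ from the $p'_j$), the existence of order-$4$ residue classes modulo $p$ and modulo $281$ (both $\equiv 1\pmod 4$), and the use of quadratic reciprocity, legitimate because the splitting condition at $\Q(i)$ forces $q\equiv 1\pmod 4$ and $q'\equiv 1\pmod 4$. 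Your closing remark is also accurate: the hypothesis $281\nmid N$ is not used inside this lemma and is recorded only for the later applications where ramification at the primes dividing $N$ must be kept away from $p'$.
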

 
\begin{proof}
The result follows from Chevotarev's density theorem because $\Q(\zeta_{p})$, $\Q(\zeta_{p'})$, $\Q(\sqrt{q})$, and $\Q(i,\sqrt{p'_1}, \ldots , \sqrt{p'_{m'}})$ are all linearly disjoint over $\Q$.
\end{proof}

The proof of the main result in this section, as in the previous results, relies on the classification of maximal subgroups of $\GSp(4)$. Then we need to know such classification in even characteristic too. 
In this case $\PSp(4,\F_{2^r})=\PGSp(4,\F_{2^r})$, and the maximal subgroups of $\Sp(4,\F_{2^r})$, $r>1$, are as follows (see Section 7.2 and Table 8.14 of \cite{BHRD13}):
\begin{enumerate}
\item the stabilizer of a totally singular or a non-singular subspace;
\item the stabilizer of a decomposition $\F_{2^r}^4 = V_1 \oplus V_2$, $\dim(V_i) = 2$;
\item the stabilizer of a structure of $\F_{2^{2r}}$-vector space on $\F^4_{2^r}$; 
\item $\SO^{+}(4,\F_{2^r})$, $\SO^{-}(4,\F_{2^r})$;
\item the Suzuki group $\Sz(\F_{2^r})$ (when $r$ is odd); 
\item $\Sp(4,\F_{2^{s}})$ for some integer $s>0$ dividing $r$.
\end{enumerate}

\begin{theo}\label{bim}
Let $k$, $N$, $M$, $p$, $p'$, $q$ and $q'$ as in Lemma \ref{pri2}. Consider a compatible system of Galois representations $\rho_\ell: G_\Q \rightarrow \GSp(4,\overline{\Q}_\ell)$ such that, for every prime $\ell$, $\rho_\ell$ ramifies only at the primes dividing $Nqq'\ell$. Assume that for every $\ell >k+2$, $\ell \nmid Nqq'$, a twist of $\overline{\rho}_\ell$ by some power of the cyclotomic character is regular in the sense of Definition 3.2 of \cite{ADW14} with tame inertia weights at most $k$. If $\rho_\ell$ is maximally induced at $q$ of order $p$ (for every $\ell \neq q$) and maximally induced at $q'$ of order $p'$ (for every $\ell \neq q'$), then the image of $\overline{\rho}_\ell^{proj}$ is $\PSp(4,\F_{\ell^s})$ or $\PGSp(4,\F_{\ell^s})$ for all prime $\ell$.
\end{theo}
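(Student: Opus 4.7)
The strategy is to extend Theorem \ref{rib2} by exploiting the two maximal inductions simultaneously in order to cover every prime $\ell$. For each prime $\ell$, I first choose an auxiliary pair $(q_0, p_0) \in \{(q, p), (q', p')\}$ satisfying $\ell \notin \{p_0, q_0\}$. Lemma \ref{pri2} forces the pairs $\{p, q\}$ and $\{p', q'\}$ to be disjoint, so at least one such choice is available for every $\ell$. Since $\ell \neq p_0$, the reduction modulo $\ell$ of the maximally induced local representation $\Ind_{G_{\Q_{q_0^4}}}^{G_{\Q_{q_0}}}(\chi_{q_0}) \otimes \alpha$ remains irreducible and its image contains an element of order $p_0$. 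I then run through the classification of maximal subgroups of $\GSp(4, \F_{\ell^r})$ — Theorem \ref{clasific} for odd $\ell$ and the list recalled immediately before the theorem statement for $\ell = 2$ — excluding every case except the desired $\PSp(4, \F_{\ell^s})$ or $\PGSp(4, \F_{\ell^s})$.

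For odd $\ell$, reducibility is excluded by the absolute irreducibility of $\overline{\rho}_\ell \vert_{D_{q_0}}$. The imprimitive cases (ii)--(iii) are handled by transplanting Section \ref{indus} of the proof of Theorem \ref{rib2} from $(q, p)$ to $(q_0, p_0)$: one obtains a quadratic extension $L$ in which $q_0$ is simultaneously inert (by Mackey applied at $q_0$) and split (by property (iv) or (v) of Lemma \ref{pri2} applied to $q_0$), a contradiction. Unramifiedness of $L$ at $\ell$ uses the regularity hypothesis via Proposition 3.4 of \cite{ADW14}, and is applicable precisely because our choice of $(q_0, p_0)$ guarantees $\ell \neq p_0$ whenever regularity needs to be invoked. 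The twisted-cubic case (v) is ruled out by Dickson's classification applied to the projective image at $D_{q_0}$: this image is non-abelian, contains a normal cyclic subgroup of order $p_0$, and the bound $p_0 > 7$ together with the index structure of $\PSL$ inside $\PGL$ eliminates all possibilities. The small-exceptional case (iv) is immediate from $|G| \leq 5040 < p_0$.

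For $\ell = 2$ we must additionally eliminate $\SO^{\pm}(4, \F_{2^r})$, the Suzuki groups $\Sz(\F_{2^r})$ ($r$ odd), and the almost-simple groups $S_5$, $A_6$, and $C_3^2 \rtimes D_8$. The three almost-simple groups are discarded at once because their orders are bounded by $720 < p_0$. The subgroups $\SO^{\pm}(4, \F_{2^r})$ preserve a decomposition and therefore fall within the scope of the imprimitive analysis of the previous paragraph. The Suzuki case is the subtle one: the element orders of $\Sz(\F_{2^r})$ divide $2^r-1$, $2^{2r}+1$, or $2^r \pm 2^{(r+1)/2}+1$, and the presence in the image of an element of order $p_0 \geq 281$ together with the local rigidity imposed by the maximal induction at $q_0$ must be shown to be incompatible, most naturally by combining the forced divisibility $p_0 \mid 2^{4r}-1$ with the global ramification bound on $\rho_2$. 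I expect this Suzuki subcase to be the main obstacle of the proof, as it does not reduce cleanly to the arguments of Theorem \ref{rib2}; the remaining verifications are routine cross-checks using Lemma \ref{pri2} and the classification.
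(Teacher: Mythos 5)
Your outline agrees with the paper up through the induced (imprimitive) case, but it breaks down on exactly the two places where the characteristic--$2$ classification introduces genuinely new subgroup types, and it is precisely for these that the paper's argument has non-obvious content.

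\textbf{The orthogonal case is misclassified.} You assert that $\SO^{\pm}(4,\F_{2^r})$ ``preserve a decomposition and therefore fall within the scope of the imprimitive analysis.'' This is false: in Aschbacher's taxonomy these are $\mathcal{C}_8$--type (classical form) subgroups, not $\mathcal{C}_2$--type (imprimitive). The group $\SO^-(4,\F_{2^r})\cong\PSL(2,\F_{2^{2r}}).2$ preserves no direct-sum decomposition at all, and $\SO^+(4,\F_{2^r})$ preserves a \emph{tensor} (central product) structure rather than a direct sum. The paper instead exploits that each $\SO^{\pm}(4,\F_{2^r})$ has a normal index--$2$ subgroup $\Gamma$ ($\cong\PSL(2,\F_{2^{2r}})$ resp.\ $\PSL(2,\F_{2^r})\otimes\PSL(2,\F_{2^r})$); the index--$2$ quotient gives rise to a quadratic extension $L/\Q$, one shows $L$ is unramified at $q$ so $q$ splits, forcing the local image into $\Gamma$, and then one eliminates $\Gamma$ by Dickson's classification (for $\SO^-$) or by tensor-indecomposability of $\Ind_{G_{\Q_{q^4}}}^{G_{\Q_q}}\overline{\chi}_q$ (for $\SO^+$). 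Your imprimitive reduction simply does not apply here, so this step is a genuine gap.

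\textbf{The Suzuki case is the actual content of the choice $p'=281$, and you leave it open.} You write that you ``expect this Suzuki subcase to be the main obstacle'' and gesture at ``the global ramification bound on $\rho_2$,'' but no such global input is needed; the obstruction is entirely local and arithmetic. The paper proves a short lemma: $|\Sz(\F_{2^r})| = 2^{2r}(2^{2r}+1)(2^r-1)$ with $r$ odd, and the multiplicative order of $2\bmod 281$ is $70$, which is even; this rules out $281\mid(2^r-1)$, and if $281\mid(2^{2r}+1)$ then $70\mid 4r$ hence $70\mid 2r$, giving $2^{2r}\equiv 1$, a contradiction. So $281$ never divides the order of a Suzuki group, and because the image at $\ell=2$ contains an element of order $p'=281$ (choosing $(q_0,p_0)=(q',p')$ since $2\neq p',q'$), Suzuki is impossible. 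This is \emph{why} Lemma \ref{pri2} fixes $p'=281$ rather than leaving it a free parameter; your proposal does not register this and cannot close the case. Also, for later reference, the small primes $\ell\in\{q,q',p_1,\dots,p_m\}$ where the regularity hypothesis and Proposition 3.4 of \cite{ADW14} are unavailable need the separate argument in the paper (containment of $L$ in the field generated by the ramification primes together with the splitting conditions on $q,q'$ and quadratic reciprocity); your sketch implicitly assumes regularity applies whenever needed.
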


\begin{proof} Mixing Theorem \ref{clasific} and characteristic 2 classification of maximal subgroups of $\GSp(4)$ we have the following cases.


\subsection{Reducible cases}

As we saw in the proof of Theorem \ref{rib2}, the maximally induced behavior implies that $\overline{\rho}_\ell$ is absolutely irreducible. Indeed, if $\ell \notin \{p,q\}$ then $\overline{\rho}_\ell \vert _{D_q}$ is absolutely irreducible and if $\ell \in \{ p,q \}$, then $\overline{\rho}_\ell \vert _{D_{q'}}$ is absolutely irreducible. Hence, the reducible cases of both classifications cannot occur. 


\subsection{Induced cases}

Now suppose that the image of $\overline{\rho}_\ell$ corresponds to an irreducible subgroup inside
some of the subgroups in cases $ii)$ and $iii)$ of Theorem \ref{clasific}; or in the cases ii) and iii) of characteristic 2 classification.
In these cases, there exist a proper open subgroup $H \subset G_\Q$ of index $2$ and a representation $\overline{\sigma}_\ell : H \rightarrow \GL(2,\overline{\F}_\ell)$ such that $\overline{\rho}_\ell \cong \Ind _H ^{G_\Q} \overline{\sigma}_\ell$. Let $L$ be the quadratic field such that $H = \Gal(\overline{\Q}/ L)$.  Note that, as $ \overline{\rho}_\ell (I_q)$ (resp. $\overline{\rho}_\ell(I_{q'})$) has order $p$ (resp. $p'$) and $\Gal(L/ \Q)$ has order $2$, $L /\Q$ is unramified at $q$ and $q'$.  In fact, as in subsection \ref{indus} (by using the strategy of Lemma 3.7 of \cite{ADW14}),
it can be proved that $q$ and $q'$ are inert. 

Now, if $\ell \notin \{p,q,q'\}$ is greater than $M$, we have that $q$ split completely in $\Q(i,\sqrt{r_1}, \ldots, \sqrt{r_{e}})$ (where $r_1, \ldots , r_e$ are the prime divisors of $N$), $q^2 \equiv -1 \mod p$, $\ell > 24k+1$ and $\ell \nmid Nq'$ then we can apply the same arguments as in subsection \ref{indus} to obtain a contradiction. 
Similarly, if $\ell \notin \{ p',q,q' \}$ we can also apply the arguments of Theorem \ref{rib2}. Thus, we can assume that $\ell \in \{ q,q', p_1, \ldots, p_m \}$, where $p_1, \ldots p_m$ are the primes smaller than or equal to the bound $M$.

Let $\ell \in \{q', p_1, \ldots, p_m \}$.  We know, from the ramification of $\overline{\rho}_\ell$ and from the fact that $L$ is unramified at $q$, that $L$ is contained in $\Q(i, \sqrt{q'}, \sqrt{p_1}, \ldots, \sqrt{p_m})$. Then, by the choice of $q$, it is completely split in $L$. Thus, we have a contradiction.
Finally, from the quadratic reciprocity law we have $\left( \frac{q}{q'}\right) = 1$, then exchanging the roles $q \leftrightarrow q'$ and $p \leftrightarrow p'$ we deal with the case $\ell = q$.


\subsection{Orthogonal cases}

Note that $\SO^+(4, \F_{2^r})$ (resp. $\SO^-(4, \F_{2^r})$) contains a normal subgroup $\Gamma$ of index 2 which is isomorphic to $\PSL(2,\F_{2^r}) \times \PSL(2, \F_{2^r})$ (resp. $\PSL(2,\F_{2^{2r}})$).
Assume that the image of $\Ind^{G_{\Q_q}}_{G_{\Q_{q^4}}} \overline{\chi}_q$ is contained in $\SO^+(4, \F_{2^r})$ or $\SO^-(4, \F_{2^r})$.  Let $L$ the quadratic extension of $\Q$ corresponding to Im$(\overline{\rho}_\ell) \cap \Gamma$ which is contained in $\Q(i, \sqrt{q}, \sqrt{q'}, \sqrt{p_1}, \ldots, \sqrt{p_m})$. Since $\Ind^{G_{\Q_q}}_{G_{\Q_{q^4}}} \overline{\chi}_q$ restricted to $I_q$ is of order $p>2$ it follows that $L$ is unramified at $q$. Then $L$ is contained in $\Q(i, \sqrt{q'}, \sqrt{p_1}, \ldots, \sqrt{p_m})$ which implies that $q$ splits in $L$ and the image of $\Ind^{G_{\Q_q}}_{G_{\Q_{q^4}}} \overline{\chi}_q$ is therefore contained in $\Gamma$.  

If $\Gamma \cong \PSL(2,\F_{2^{2r}})$ we obtain by using the Dickson's classification of maximal subgroups of $\PSL(2,\F_{2^r})$ that the image of $\Ind^{G_{\Q_q}}_{G_{\Q_{q^4}}} \overline{\chi}_q$ cannot be contained in $\Gamma$. Indeed, the image of $\Ind^{G_{\Q_q}}_{G_{\Q_{q^4}}} \overline{\chi}_q$ cannot be contained in a dihedral group $D_{2n}$ because in characteristic 2 we know that $n=(2^r \pm 1)$. Moreover, in such characteristic, the groups $A_4$, $S_4$ and $A_5$ cannot occur.  

The case of groups of upper triangular matrices can be excluded by observing that such groups are isomorphic to the semidirect product of an elementary abelian $2$-group and a cyclic group of order $2^r -1$ and that the image of $\Ind^{G_{\Q_q}}_{G_{\Q_{q^4}}} \overline{\chi}_q$ contain an element of order $4$.

Therefore the image of $\Ind^{G_{\Q_q}}_{G_{\Q_{q^4}}} \overline{\chi}_q$ should be $\PSL(2,\F_{2^s})$ for some integer $s$. As we have chosen $p > 13$ (then $s>1$) we have that $\PSL(2,\F_{2^s})$ is a simple group. But the image of $\Ind^{G_{\Q_q}}_{G_{\Q_{q^4}}} \overline{\chi}_q$ contains a proper normal subgroup of order $p$. Then the image of $\Ind^{G_{\Q_q}}_{G_{\Q_{q^4}}} \overline{\chi}_q$ cannot be contained in $\SO^-(4, \F_{2^r})$.

Finally if $\Gamma \cong \PSL(2,\F_{2^r}) \times \PSL(2,\F_{2^r})$ we have, from the fact that $\Ind^{G_{\Q_q}}_{G_{\Q_{q^4}}} \overline{\chi}_q$ is irreducible, that the image of $\Ind^{G_{\Q_q}}_{G_{\Q_{q^4}}} \overline{\chi}_q$ cannot be contained in $\SO^+(4, \F_{2^r})$ either. Then, the case iv) of characteristic 2 classification cannot occur.


\subsection{Suzuki groups case}

In order to deal with case v) of characteristic 2 classification we have to prove the following result.

\begin{lemm}  
The order of any Suzuki group is not divisible by 281.
\end{lemm}
\begin{proof}
Let $r$ be a positive integer and $\Sz(\F_{2^r})$ be a Suzuki group. We know that the order of $\Sz(\F_{2^r})$ is equal to $2^{2r}(2^{2r}+1)(2^r-1)$ and that the Suzuki group only exist if $r$ is odd.
Suppose that $281$ divides the order of $\Sz(\F_{2^r})$, in particular $281$ divides $(2^{2r}+1)(2^r-1)$. If $281$ divides $(2^r-1)$, then $2^r \equiv 1 \mod 281$. But the order of $2$ modulo $281$ is $70$ then we have a contradiction because $r$ is odd. Then we can assume that $281$ divides $(2^{2r}+1)$, in particular we have that $2^{2r} \equiv -1 \mod 281$ and $2^{4r} \equiv 1 \mod 281$. From this, we have that $70$ divides $4r$ and therefore that $70$ divides $2r$. Thus $2^{2r} \equiv 1 \mod 281$ which is a contradiction too (it contradicts the previous line). 
\end{proof}

By the choice of $p'$ and the previous Lemma, we have that the Suzuki groups cannot occur.


\subsection{Stabilizer of a twisted cubic case}
 
The case $v)$ of Theorem \ref{clasific} was dealt with for all $\ell \notin \{2,p,q\}$ in the proof of Theorem $\ref{rib2}$. Moreover, exchanging the roles $q \leftrightarrow q'$ and $p \leftrightarrow p'$ we deal with the case $\ell \in \{ p,q\}$. Finally, we know that the stabilizer of a twisted cubic does not appear in the classification of maximal subgroups if $\ell < 5$.  


\subsection{The rest of exceptional cases}

The cases $iv)$ of Theorem \ref{clasific} cannot happen because we have chosen $p$ and $p'$ greater than $13$. We remark that by the same reason we exclude the case when $r=1$ in the characteristic 2 classification since the order of $\Sp_4(\F_2)$ is $2^4\cdot 3^2 \cdot 5$.
\end{proof}

\begin{rema}\label{pesos}
Let $\rho_\Pi = (\rho_{\Pi,\lambda})_\lambda$ be a symplectic compatible system of $4$-dimensional semisimple Galois representations attached to a RAESDC automorphic representation $\Pi$ of $\GL(4,\A)$. By part $iii)$ of Theorem \ref{raesdc} this compatible system is Hodge-Tate regular with constant Hodge-Tate weights and for every $\ell \notin S$ and $\lambda \vert \ell$ the representation $\rho_{\Pi,\lambda}$ is crystalline.
Let $a \in \Z$ be the smallest Hodge-Tate weight, $k$ be the biggest difference between any two Hodge-Tate numbers and $\ell \notin S$ be a prime such that $\ell>k+2$. By Fontaine-Lafaille theory the representation $\chi_\ell^a \otimes \overline{\rho}_{\Pi,\lambda}$, $\lambda \vert \ell$, is regular in the sense of Definition 3.2 of \cite{ADW14} with tame inertia weights at most $k$ and the tame inertia weights of this representation are bounded by $k$.
\end{rema}


\section{Automorphic representations without exceptional primes}

In this section we will construct a cuspidal automorphic representation $\Pi$ of $\GL(4, \A)$ such that its associated compatible system satisfies the conditions of Theorem \ref{bim}. Then these compatible system will have "large" image for all primes. 

More precisely, fixing a rational prime $t$ different from $281$ and given $p$, $p'$, $q$ and $q'$ different primes as in Lemma \ref{pri2} (where $N=t$ and $k > 12$), we will construct a cuspidal automorphic representation $\Pi$ of $\GL(4,\A)$ with the following properties:
\begin{enumerate}
\item $\Pi$ is unramified outside $\{t,q,q'\}$,
\item $\Pi \simeq \Pi^\vee$ and the central character of $\Pi$ is trivial,
\item $\rec(\Pi_q) \simeq \WD(\rho_q)$ and $\rec(\Pi_{q'}) \simeq \WD(\rho_{q'})$ (where $\rho_q$ and $\rho_{q'}$ are representations as in Section 4), and
\item $\Pi_\infty$ is of symplectic type and such that $\rec_{\R}(\Pi_\infty)$ is a direct sum of 2-dimensional representations $\sigma_1$ and $\sigma_2$ such that, the restriction of $\sigma_i$ to $\C^{\times}$, $i=1,2$, is of the form $(z/\overline{z})^{\frac{1-2\kappa_i}{2}} \oplus (z/\overline{z})^{- \frac{1-2\kappa_i}{2}}$, where $\kappa_i$ are positive integers such that $\kappa_2 \geq 2$ and $\kappa_1- \kappa_2 >4$.
\end{enumerate} 

\begin{rema}
The conditions in the $\kappa_i$ are in order to assure that $\Pi_\infty$ is a local lift of an integrable discrete series representation $\tau_\infty$ of $\SO(3,2)$ (see Section 5.1 of \cite{KLS}). In particular, according to Section 1, the Hodge-Tate weights are $\{ 2-\kappa_1, 2- \kappa_2, 1+\kappa_2, 1+\kappa_1 \}$. Then, we have that the biggest difference between any two Hodge-Tate numbers is $2\kappa_1-1$, which is greater than 12. So, we must assume that $k$ (as in Remark \ref{pesos}) is greater than $12$ in order to allow the possibility that the conditions in the $\kappa_i$ are satisfied. 
\end{rema}

In order to construct such automorphic representation, we will start by constructing a globally generic cuspidal automorphic representation $\tau$ of $\SO(5,\A)$. Let $\SO(5)$ be the split special orthogonal group of rank 2 defined over $\Q$. Fix two finite and disjoint sets of places: $D = \{\infty, q, q'\}$ and $S=\{ t \}$ such that $\SO(5)$ is unramified at all primes not in $D \cup S$ (this means that $\SO(5,\Z_p)$ is a hyperspecial maximal compact subgroup of $\SO(5,\Q_p)$). 

First, we need to specify what we want at the local places $q$, $q'$ and $\infty$. For such purpose we use the following result of Jiang and Soudry (Theorem 6.4 of \cite{JS03} and Theorem 2.1 and \cite{JS04}), which is a particular case of Theorem 5.3 of \cite{KLS}.

\begin{theo}\label{jsc}
There exist a bijection between irreducible generic discrete series representations of $\SO(5,\Q_q)$ and irreducible generic representations of $\GL(4,\Q_q)$ with Langlands parameter of the form $\sum \sigma_i$ with $\sigma_i$ irreducible symplectic representations which are pairwise non-isomorphic. 
\end{theo}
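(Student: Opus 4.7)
The plan is to establish the bijection via Langlands parameters, using the exceptional identification of the Langlands dual group of split $\SO(5)$ with $\Sp(4,\C)$. Under this identification, an L-parameter for $\SO(5,\Q_q)$ is a homomorphism $\phi: W_{\Q_q} \times \SL(2,\C) \to \Sp(4,\C)$, and composing with the standard embedding $\Sp(4,\C) \hookrightarrow \GL(4,\C)$ produces an L-parameter for $\GL(4,\Q_q)$ whose underlying $4$-dimensional representation is symplectically self-dual. I would first observe the purely group-theoretic fact that $\phi$ is a \emph{discrete} parameter (i.e.\ its image is not contained in any proper Levi subgroup of $\Sp(4,\C)$) if and only if $\phi = \bigoplus_i \sigma_i$ as a representation with each $\sigma_i$ irreducible and symplectically self-dual, and with the $\sigma_i$ pairwise non-isomorphic; this is a standard consequence of the classification of semisimple subgroups of $\Sp(4,\C)$ and the fact that a decomposition with repeated summand $\sigma \oplus \sigma$ would be centralised by an $\SL(2,\C)$ inside $\Sp(4,\C)$.

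Given the parameter side, the $\GL(4)$ direction is handled by the local Langlands correspondence of Harris--Taylor and Henniart, which attaches to every such $\phi$ a unique irreducible generic admissible representation of $\GL(4,\Q_q)$ with the prescribed parameter. The non-trivial content is producing the $\SO(5)$-representation attached to $\phi$ and showing that the resulting map is a bijection onto the irreducible generic discrete series. For this I would use the local descent construction of Ginzburg--Rallis--Soudry: starting from the generic representation $\Pi$ of $\GL(4,\Q_q)$, form an appropriate Fourier--Jacobi (Bessel-type) coefficient to produce a generic admissible representation $\tau$ of $\SO(5,\Q_q)$, and verify genericity and the expected L-parameter by matching local Rankin--Selberg gamma factors via Shahidi's theory of local coefficients. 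Running the constructions in both directions, together with the strong multiplicity one property of generic representations, yields that the two maps are mutually inverse.

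The main obstacle, and where most of the real work lives, is proving that when $\phi$ is discrete the descent $\tau$ is (i) nonzero, (ii) irreducible, and (iii) actually in the discrete series rather than merely tempered. Non-vanishing requires a careful analysis of the poles of the Eisenstein series and intertwining operators entering the descent construction, and is ultimately controlled by the symplectic self-duality of $\Pi$ (equivalently, the pole at $s=1$ of $L^{S}(s,\Pi,\wedge^{2})$). Discreteness of $\tau$ follows because any properly induced generic representation would have L-parameter factoring through a proper Levi, contradicting the hypothesis on $\phi$; this step relies on Shahidi's standard module conjecture, which is available in this rank. Finally, uniqueness of the generic member of the tempered L-packet attached to $\phi$ — needed to make the map well-defined on the $\SO(5)$ side — follows from the parameterisation of the packet by characters of the component group $\pi_{0}(Z_{\Sp(4,\C)}(\phi))$ together with Rodier-type genericity criteria. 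Once these local analytic inputs are in place, the bijection is a formal consequence.
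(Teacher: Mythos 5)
The paper does not actually prove this statement: it is imported verbatim as a theorem of Jiang and Soudry, with references to Theorem~6.4 of \cite{JS03} and Theorem~2.1 of \cite{JS04}, so there is no in-paper argument to compare against. That said, your outline is a fair high-level account of what the cited papers do: the identification $\SO(5)^{\vee}\cong\Sp(4,\C)$, the purely group-theoretic characterisation of discrete parameters as multiplicity-free direct sums of irreducible symplectic subrepresentations, local Langlands for $\GL(4)$ (Harris--Taylor, Henniart) on one side, and the Ginzburg--Rallis--Soudry local descent on the other, with $\gamma$-factor matching via Shahidi's local coefficients. Two corrections are worth flagging. First, the tool that makes the $\SO(5)$ direction well-defined and injective is the \emph{local converse theorem} for $\SO_{2n+1}$ --- this is precisely the title result of \cite{JS03} --- and ``strong multiplicity one for generic representations'' together with ``Rodier-type genericity criteria'' is not an adequate substitute for it; the local converse theorem is the pivot of the whole bijection and should be named. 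Second, for the odd orthogonal tower the descent from $\GL(2n)$ is built from a Bessel (Gelfand--Graev) coefficient, not a Fourier--Jacobi coefficient; the latter belongs to the symplectic/metaplectic tower. With those adjustments your sketch tracks the literature, but you should recognise that in the paper under review this theorem is a cited black box, not a result being reproved.
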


From this result we have that there is a generic supercuspidal representation $\tau_q$ of $\SO(5,\Q_q)$ (resp. $\tau_{q'}$ of $\SO(5, \Q_{q'})$) which corresponds to a supercuspidal representations $\Pi_q$ of $\GL(4,\Q_q)$ (resp. $\Pi_{q'}$ of $\GL(4,\Q_{q'})$) such that $\rec(\Pi_q) \simeq \WD(\rho_q)$ (resp. $\rec(\Pi_{q'}) \simeq \WD(\rho_{q'})$).
This correspondence is also know at the archimedean places, see section 5.1 of \cite{KLS}. From this we deduce that there is a generic integrable discrete series representation $\tau_\infty$ on $\SO(5,\R)$ which corresponds to the representation $\Pi_\infty$ fixed above with the desired Langlands parameter. 

The following result is a particular case of Theorem 4.5 of \cite{KLS}, which is proved by using Poincar\'e Series.

\begin{theo}\label{pres}
Let $S$ and $D$ disjoint sets as above. Assume that we are given a generic integrable discrete series representation $\tau_\infty$ of $\SO(5,\R)$ and a generic supercuspidal representation $\tau_q$ of $\SO(5,\Q_q)$ for every $q \in D$. Then there exist a globally generic cuspidal automorphic representation $\tau$ of $\SO(5,\A)$ such that the local component of $\tau$ at $\infty$ (resp. at $q$) is $\tau_\infty$ (resp. $\tau_q$) and $\tau_v$ is unramified for every $v$ outside $D \cup S$.
\end{theo}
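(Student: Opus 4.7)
The plan is to realize $\tau$ as the representation generated by a nonzero cuspidal, generic automorphic form built from a Whittaker--Poincar\'e series attached to a carefully chosen factorizable test function $f=\prod_v f_v$ on $\SO(5,\A)$. First, I would take $f_\infty$ to be a matrix coefficient of $\tau_\infty$, which is $L^1$ modulo the center thanks to the integrability hypothesis on the discrete series; at each finite place $q\in D$, let $f_q$ be a matrix coefficient of the supercuspidal $\tau_q$, hence compactly supported modulo the center; at each $v\notin D\cup S$ set $f_v=\mathbf{1}_{K_v}$ for $K_v$ a hyperspecial maximal compact subgroup; and at the auxiliary place $t\in S$ leave $f_t\in C_c^\infty(\SO(5,\Q_t))$ as a free parameter, to be adjusted in the final step.

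Next, fix a nondegenerate character $\psi$ of $N(\Q)\backslash N(\A)$, where $N$ denotes the unipotent radical of a Borel of $\SO(5)$, and form the Whittaker--Poincar\'e series
\[
P_f(g)\;=\;\sum_{\gamma\in N(\Q)\backslash \SO(5,\Q)} W_f(\gamma g),\qquad W_f(g)\;=\;\int_{N(\A)} f(ng)\,\psi(n)^{-1}\,dn.
\]
Integrability of $f_\infty$ together with the compact support modulo center of the $f_q$ for $q\in D\setminus\{\infty\}$ will give absolute convergence of the Whittaker integral and uniform convergence of the outer series on compacta, so $P_f$ is a smooth automorphic form. Cuspidality is automatic because matrix coefficients of supercuspidal local representations kill the constant terms along every proper parabolic, and by construction the $\psi$-Whittaker coefficient of $P_f$ equals $W_f$; thus once $P_f\not\equiv 0$ the representation $\tau$ it generates is forced to be globally generic and cuspidal.

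Identifying the local components is then a Hecke-algebraic bookkeeping exercise: at $v\notin D\cup S$ the factor $f_v=\mathbf{1}_{K_v}$ ensures $\tau_v$ is unramified, while at each $v\in D$ the matrix coefficient $f_v$ projects onto the $\tau_v$-isotypic part of the local representation, so the $v$-component of $\tau$ coincides with $\tau_v$ as prescribed. The main obstacle---and the reason for introducing the auxiliary place $t\in S$---is the nonvanishing of $P_f$: the map $f_t\mapsto W_f(e)$ is a nontrivial linear functional on $C_c^\infty(\SO(5,\Q_t))$ because the local Whittaker functional at $t$ does not vanish on some generic constituent. Choosing $f_t$ so that $W_f(e)\neq 0$ yields $P_f(e)\neq 0$ after a standard unfolding, and this is the delicate step, since one must verify that such a choice of $f_t$ is compatible with the convergence estimates above and does not disturb the local identifications at the remaining places. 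This is precisely the content of Theorem 4.5 of \cite{KLS}, applied in our special setting.
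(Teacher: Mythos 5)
Your proposal is correct and takes essentially the same approach as the paper: the paper proves Theorem \ref{pres} simply by citing it as a particular case of Theorem 4.5 of \cite{KLS}, remarking that the latter is proved via Poincar\'e series, and your sketch is precisely that Whittaker--Poincar\'e series argument (integrable discrete series matrix coefficient at $\infty$, supercuspidal matrix coefficients at the finite places of $D$ to force cuspidality and pin down the local components, the characteristic function of a hyperspecial $K_v$ at the unramified places, and a free test function at the auxiliary place $t\in S$ to secure nonvanishing). You are essentially unpacking the content of the cited theorem rather than doing anything different.
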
 

Then, this result implies that there exists a globally generic cuspidal automorphic representation $\tau$ on $\SO(5,\A)$ with trivial central character, unramified outside $\{ t, q,q'\}$ and with our desired local components at $q$, $q'$ and $\infty$.

Finally, combining Theorem 7.1 of \cite{CKPSS04} with Theorem E of \cite{JS04}, we can lift $\tau$ to an irreducible automorphic representation $\Pi$ of $\GL(4,\A)$ with trivial central character, such that 
\begin{enumerate}
\item $\Pi \simeq \Pi^\vee $,
\item $\Pi$ is cuspidal, 
\item $L^S(s,\Pi, \wedge ^2)$ has a simple pole at $s=1$, 
\item $\Pi$ is unramified outside $\{ t,q,q' \}$, 
\item $\rec(\Pi_q) \simeq \WD(\rho_q)$, 
\item$\rec(\Pi_{q'}) \simeq \WD(\rho_{q'})$, and 
\item $\Pi_\infty$ has the regular algebraic parameter described in the beginning of the section. 
\end{enumerate} 

Observe that under these conditions the compatible system attached to $\Pi$, as in Theorem \ref{raesdc}, has symplectic images (see Section 5.2 of \cite{KLS}). Then by Theorem \ref{bim} (see also Remark \ref{pesos}), applied to this compatible system, we have the following result. 

\begin{theo}
There are compatible systems $(\rho_\lambda)_\lambda$ such that the image of $\overline{\rho}^{proj}_\lambda$ is $\PSp(4,\F_{\ell^s})$ or $\PGSp(4,\F_{\ell^s})$ for all prime $\lambda$. 
\end{theo}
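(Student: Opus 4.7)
The plan is to assemble all the pieces developed in the preceding sections. First, fix an auxiliary prime $t\neq 281$, choose $k\geqslant 12$ and apply Lemma \ref{pri2} with $N=t$ to obtain primes $p,p',q,q'$ satisfying conditions (i)--(v) there. These are the primes at which maximal induction will be imposed.

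Next, realize the desired local data on the $\SO(5)$ side. At each of $q$ and $q'$, the local Langlands parameters $\WD(\rho_q)$ and $\WD(\rho_{q'})$ of Section~\ref{se3} are irreducible symplectic of dimension $4$, hence in particular of the form to which Theorem \ref{jsc} applies; this produces generic supercuspidal representations $\tau_q$ and $\tau_{q'}$ of $\SO(5,\Q_q)$ and $\SO(5,\Q_{q'})$ whose $\GL(4)$-transfers are the local representations corresponding to $\rho_q$ and $\rho_{q'}$. At $\infty$, choose a generic integrable discrete series $\tau_\infty$ of $\SO(5,\R)$ corresponding to the prescribed cohomological archimedean parameter, using the archimedean correspondence from Section 5.1 of \cite{KLS}. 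Then Theorem \ref{pres} produces a globally generic cuspidal automorphic representation $\tau$ of $\SO(5,\A)$ with trivial central character, unramified outside $\{t,q,q'\}$, and with the prescribed local components at $\infty$, $q$, $q'$.

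Now transfer to $\GL(4)$. Combining the generic functorial transfer from $\SO(5)$ to $\GL(4)$ of \cite{CKPSS04} with the characterization of the image from \cite{JS04} gives an irreducible cuspidal self-dual automorphic representation $\Pi$ of $\GL(4,\A)$ with trivial central character, unramified outside $\{t,q,q'\}$, such that $L^S(s,\Pi,\wedge^2)$ has a simple pole at $s=1$, with $\rec(\Pi_q)\cong\WD(\rho_q)$, $\rec(\Pi_{q'})\cong\WD(\rho_{q'})$, and with the desired regular algebraic archimedean parameter. In particular $(\Pi,\mathbf{1})$ is RAESDC, so Theorem \ref{raesdc} attaches to it a compatible system $(\rho_{\Pi,\lambda})_\lambda$; because $L^S(s,\Pi,\wedge^2)$ (not the symmetric square $L$-function) has the pole, the image of each $\rho_{\Pi,\lambda}$ lands in $\GSp(4,\overline{E}_\lambda)$ after conjugation, as recorded in Section 5.2 of \cite{KLS}.

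It remains to verify the hypotheses of Theorem \ref{bim} for this compatible system. The ramification set is contained in the primes dividing $Nqq'\ell$ with $N=t$ by construction. For every prime $\ell\notin\{t,q,q'\}$ with $\ell>k+2$, Remark \ref{pesos} shows that a cyclotomic twist of $\overline{\rho}_{\Pi,\lambda}$ is regular with tame inertia weights at most $k$, as required. The key point is the maximal induction: by strict local--global compatibility (part $v)$ of Theorem \ref{raesdc}), we have $\WD(\rho_{\Pi,\lambda}|_{G_{\Q_q}})^{F\text{-}ss}\cong\rec(\Pi_q\otimes|\det|_q^{-3/2})\cong\WD(\rho_q\otimes\alpha)$ for an unramified twist $\alpha$; since $\rho_q$ is the induction of a tamely ramified character of order $2p$ and $\ell\neq p$, passing to the reduction mod $\lambda$ preserves the induced structure, and likewise at $q'$ when $\ell\neq p'$. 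Thus $\rho_{\Pi,\lambda}$ is maximally induced at $q$ of order $p$ for every $\ell\neq q$ and at $q'$ of order $p'$ for every $\ell\neq q'$. All the hypotheses of Theorem \ref{bim} are therefore met, and its conclusion yields the required description of $\overline{\rho}_{\Pi,\lambda}^{proj}$ for every prime $\lambda$. The main subtlety lies in arranging the local supercuspidal data at $q$ and $q'$ so that their $\GL(4)$-transfers have precisely the Weil--Deligne parameters $\WD(\rho_q)$ and $\WD(\rho_{q'})$, and then propagating this through strict local--global compatibility to conclude maximal induction at \emph{every} residual prime; once this is in place, the rest is a direct invocation of the results of Sections \ref{se4} and \ref{se3}.
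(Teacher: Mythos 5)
Your proposal is correct and follows essentially the same path as the paper: choose $p,p',q,q'$ via Lemma \ref{pri2}, build $\tau$ on $\SO(5,\A)$ with the prescribed supercuspidal local components using Theorem \ref{jsc} and Theorem \ref{pres}, transfer to a RAESDC $\Pi$ on $\GL(4,\A)$ via \cite{CKPSS04} and \cite{JS04}, note the symplectic image from the pole of $L^S(s,\Pi,\wedge^2)$, and finally invoke local--global compatibility, Remark \ref{pesos}, and Theorem \ref{bim}. No substantive deviation from the paper's argument.
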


\begin{rema}
Note that in particular there is an infinite family of RAESDC automorphic representations $(\Pi_n)_{n \in \N}$ of $\GL(4,\A)$ such that, for a fixed prime $\ell$, the size of the image of $\overline{\rho}^{proj}_{\Pi_n,\lambda_n}$ for $\lambda_n \vert \ell$ is unbounded for running $n$, because we can choose $p$ as large as we please by increasing the bound $M$, so that elements of bigger and bigger orders appear in the inertia images.
\end{rema}

On the other hand, by using a result of Jacquet, Piatetski-Shapiro, and Shalika (see, Theorem 9.1 of \cite{KS02}) there exist a globally generic cuspidal automorphic representation $\pi$ of $\GSp(4,\A)$ with trivial central character, such that $\Pi$ is the functorial lift of $\pi$ in the sense of Section \ref{lasec}. Therefore we have the following result.

\begin{theo}
There are  infinitely many globally generic cuspidal automorphic representations of $\GSp(4,\A)$ without exceptional primes.
\end{theo}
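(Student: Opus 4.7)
The plan is to combine the previous construction with the descent from $\GL(4)$ to $\GSp(4)$ guaranteed by the cited theorem of Jacquet, Piatetski-Shapiro and Shalika. Concretely, fix the auxiliary prime $t \neq 281$, a weight parameter $k \geq 12$, and choose primes $p, p', q, q'$ as in Lemma \ref{pri2}. Apply the construction preceding the previous theorem to obtain a RAESDC automorphic representation $\Pi$ of $\GL(4,\A)$ satisfying conditions $i)$--$iv)$ of the section (self-duality, trivial central character, prescribed local components, pole of $L^S(s,\Pi,\wedge^2)$ at $s=1$), so that the attached compatible system $(\rho_{\Pi,\lambda})_\lambda$ has image $\PSp(4,\F_{\ell^s})$ or $\PGSp(4,\F_{\ell^s})$ for every prime $\ell$ (this is exactly the content of the previous theorem).

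Next, I would invoke Theorem 9.1 of \cite{KS02}: since $\Pi$ is cuspidal, self-dual, has trivial central character, and $L^S(s,\Pi,\wedge^2)$ has a simple pole at $s=1$, there exists a globally generic cuspidal automorphic representation $\pi$ of $\GSp(4,\A)$ with trivial central character whose strong functorial transfer to $\GL(4,\A)$ is precisely $\Pi$. By Theorem \ref{21} the compatible system attached to $\pi$ is defined as $\rho_{\pi,\lambda} = \rho_{\Pi,\lambda}$, and by the strong lift property the local parameters agree at every place; in particular the archimedean component $\pi_\infty$ inherits from $\Pi_\infty$ the fact that it is a discrete series representation (making $\pi$ fall into the setting of Section \ref{lasec}). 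Hence the image of $\overline{\rho}^{proj}_{\pi,\lambda}$ is $\PSp(4,\F_{\ell^s})$ or $\PGSp(4,\F_{\ell^s})$ for all $\lambda$, i.e.\ $\pi$ has no exceptional primes.

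To obtain infinitely many such $\pi$, I would argue that the construction can be iterated with unbounded parameters. Following Remark \ref{pesos} and the remark preceding this theorem, one may fix $t$ and the archimedean weight and let the bound $M$ in Lemma \ref{pri2} tend to infinity, thereby forcing the auxiliary prime $p$ (and hence $q$, $q'$) to grow without bound. Each choice produces a representation $\Pi_n$ of $\GL(4,\A)$ whose inertia image at $q_n$ contains an element of order $p_n$; fixing any auxiliary prime $\ell$ one sees the sizes of the residual projective images $|\overline{\rho}^{proj}_{\Pi_n,\lambda_n}|$ are unbounded, so the $\Pi_n$ are pairwise non-isomorphic. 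The descended representations $\pi_n$ on $\GSp(4,\A)$ are therefore pairwise non-isomorphic as well, since two isomorphic $\pi_n$'s would transfer to isomorphic $\Pi_n$'s by the strong lift property.

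The only potentially delicate step is verifying that the conditions needed to apply the descent theorem of \cite{KS02} match exactly the conclusions of the previous construction (cuspidality, the simple pole of $L^S(s,\Pi,\wedge^2)$ at $s=1$, self-duality with trivial central character, and the appropriate archimedean component); all of these were built into the construction of $\Pi$, so this amounts to quoting the references. The remainder is a pigeonhole/unboundedness argument on the parameter space, which is straightforward once one has the previous theorem and Remark in hand.
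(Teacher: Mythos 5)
Your proof is correct and follows essentially the same route as the paper: construct the RAESDC representation $\Pi$ with maximally induced local behavior as in the preceding sections, apply Theorem 9.1 of \cite{KS02} to descend to a globally generic cuspidal $\pi$ on $\GSp(4,\A)$, and invoke the remark on varying $M$ to get infinitely many. The only point you spell out more explicitly than the paper is the pigeonhole argument (unbounded residual projective images at a fixed $\ell$ forcing infinitely many non-isomorphic $\Pi_n$, hence infinitely many non-isomorphic $\pi_n$ by the strong lift), which is a welcome clarification but not a different method.
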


Another method to construct automorphic representations with prescribed local conditions is by assuming the Arthur's work on endoscopic classification of automorphic representation for symplectic groups \cite{Art13} and adapting some results of \cite{Shi12}. This work is still conditional on the stabilization of the twisted trace formula and a few expected technical results in harmonic analysis. However, significant progress in this direction has been made by Moeglin and Waldspurger \cite{Wal14a}, \cite{Wal14b}.

This method is used in \cite{ADSW14} in order to construct $2n$-dimensional symplectic compatible systems $(\rho_\lambda)_\lambda$ such that the image of $\overline{\rho}_\lambda$ contain a subgroup conjugated to  $\Sp(2n,\F_\ell)$ for a density one set of primes. The limitation on the set of primes in loc. cit. is due to the authors need to assume the existence of a transvection in order to control the different possibilities for the images of the Galois representations in the compatible system. However, in dimension 4, we eliminate this problem by using the Aschbacher's classification.


\end{document}